\newtheorem{theorem}{Theorem}[section]
\newtheorem{definition}[theorem]{Definition}
\newtheorem{example}[theorem]{Example}
\newtheorem{lemma}[theorem]{Lemma}
\newtheorem{proposition}[theorem]{Proposition}
\newtheorem{remark}[theorem]{Remark}
\newtheorem{conjecture}[theorem]{Conjecture}
\newtheorem*{theorem*}{Theorem}
\newtheorem*{definition*}{Definition}
\newtheorem*{lemma*}{Lemma}
    \def\n{#1}
    \foreach \i [evaluate=\i as \j using {int(mod(\i,\n)+1)}] in {1,...,\n} {
\tikzset{
  midarrow50/.style={
    blue, thick,
    postaction={decorate},
    decoration={markings, mark=at position 0.5 with {\arrow{>}}}
  },
  midarrow60/.style={
    blue, thick,
    postaction={decorate},
    decoration={markings, mark=at position 0.6 with {\arrow{>}}}
  },
  midarrow40/.style={
    blue, thick,
    postaction={decorate},
    decoration={markings, mark=at position 0.4 with {\arrow{>}}}
  }
}
\DeclareMathOperator{\End}{End}
\DeclareMathOperator{\ad}{ad}
\DeclareMathOperator{\Cent}{Cent}
\newcommand{\fso}{\mathfrak{so}}
\newcommand{\bbc}{\mathbb{C}}
\newcommand{\Cb}{\mathcal{B}}
\newcommand{\Cc}{\mathcal{C}}
\newcommand{\Cf}{\mathcal{F}}
\newcommand{\Ci}{\mathcal{I}}
\newcommand{\Cs}{\mathcal{S}}
\newcommand{\Ct}{\mathcal{T}}
\newcommand{\Cu}{\mathcal{U}}
\newcommand{\Cv}{\mathcal{V}}
\newcommand{\Cw}{\mathcal{W}}
\newcommand{\Ip}{\sigma}
\newcommand{\Sp}{\omega}
\newcommand{\Bp}{\beta}
\newcommand{\lpi}{\langle}
\newcommand{\rpi}{\rangle}
\newcommand{\ama}{\mathfrak{A}}
\newcommand{\tama}{\mathfrak{O}}
\newcommand{\clif}{\mathcal{C}}
\title{On angular momentum algebras and their relations}
\author{Kieran Calvert, Marcelo De Martino, Roy Oste}
\address{School of Mathematical Sciences\\ Lancaster University\\ Lancaster\\ LA1 4YW\\ United Kingdom}
\email[K.~Calvert]{kieran.calvert@lancaster.ac.uk}
\address{Forward College\\ Rua das Flores\\ 71, 1200-194 Lisboa\\ Portugal}
\email[M.~De Martino]{marcelo.demartino@forward-college.eu}
\address{Department of Mathematics, Computer science and Statistics\\ Ghent University\\
9000 Ghent\\
Belgium}
\email[R.~Oste]{roy.oste@ugent.be}
\date{August 2025}
\begin{document}

\begin{abstract}
    In this paper, we study the centraliser of $\mathfrak{osp}(1|2)$, denoted the total angular momentum algebra (TAMA), in the Weyl Clifford algebra. The TAMA extends the angular momentum algebra (AMA), which arises as the centraliser of
    \(\mathfrak{sl}(2)\) and admits a diagrammatic presentation via the crossing relation described by Feigin and Hakobyan. Using Young symmetrisers we construct an analogue relation for the even subalgebra of the TAMA. We prove that for rank $4$ and $5$ these relations generate a presentation for the even subalgebra of the TAMA.
\end{abstract}

\maketitle

\section{Introduction} 
The angular momentum algebra (AMA Definition \ref{def:AMA}) encodes the symmetries generated by orbital angular momentum operators in a wide variety of mathematical and physical contexts. It arises naturally as the centraliser algebra of the infinitesimal \(\mathfrak{sl}(2)\) action in the harmonic Howe dual pair \((O(n), \mathfrak{sl}(2))\), where the \(\mathfrak{sl}(2)\) is generated by the Laplacian, the squared norm, and the Euler operator. For more information on the topic of Howe dualities see \cite{GW09, Ho89, Ho89b, Ho95} and for this specific pair in the context of rational Cherednik algebras, see \cite{CDM22,CD20b}. Equivalently, the AMA can be realized as a quotient of the universal enveloping algebra \(\Cu(\mathfrak{so}(n))\) and the ideal generating the kernel of this quotient was elegantly described in the work of Feigin-Hakobyan \cite{FH15} in terms of crossing relations (Equation \ref{eq:CrossRels}) in the context of rational Cherednik algebras.

The total angular momentum algebra (TAMA Definition \ref{def:TAMA}) extends the AMA by incorporating spin operators alongside the orbital part, losely speaking forming a ``double cover'' of the AMA. Algebraically, this is achieved by enlarging the dual partner to include both the angular momentum and spin operators, which together close into a Lie superalgebra of type \(\mathfrak{osp}(1|2)\) in both the classical setting and the deformed setting of rational Cherednik algebras (see \cite{DBGV16,DBLROVJ22,DBOSS12,DBOJ18,DBOVJ18,DMRO23,LR21, LRO19,OSS09} and for a closely related unitary pair see \cite{BDSES10,CDBDMO20}). The TAMA arises as the centraliser algebra of this \(\mathfrak{osp}(1|2)\) action in the spinorial Howe dual pair \((\mathrm{Pin}(n), \mathfrak{osp}(1|2))\), with the polynomial-spinor space carrying the natural representation. Several structural properties of this algebra are known: a generating set was described in unpublished work of Oste \cite{Os21}, and the center of the TAMA was determined in \cite{CDMO24} in the more general deformed setting of rational Cherednik algebras.

In this paper, we explore analogies of the crossing relations of the AMA in the setting of the TAMA (Equation (\ref{e:relsinideal})). Our focus is on the even part of the TAMA, and our aim is to describe it in terms of generators and relations. We will work entirely in the classical, undeformed setting of the Weyl-Clifford algebra, but since the nature of the results obtained here are interpreted in the associated graded algebras, we expect that they can be generalized to the rational Cherednik setting in a rather natural way. 

The main novelties of this paper are the new relations Theorem~\ref{t:relsinideal} and Conjecture~\ref{con:maincon}, whose precise statements and proofs are contained in Section~\ref{Sec:crossingrelsforTAMA}.

We now give a breakdown of the contents of this paper.
In Section~\ref{sec:prelims} we introduce the Weyl and Clifford algebras with related bilinear pairings. Section~\ref{s:diagrbasisAMA}, in preparation for the main results, includes a proof of the diagrammatic basis given in~\cite{FH15}. In Section~\ref{s:EvenTAMA} we show that there is a natural homomorphism from the universal enveloping algebra $\mathcal{U}(\mathfrak{so}(n))$, corresponding to the infinitesimal action of the $\mathrm{Pin}(n)$ group, onto the even part of the TAMA. Section~\ref{s:TableauxRels} reinterprets the crossing relation of the AMA as a Young symmetriser of suitable tableaux and describes its natural analogue in the TAMA, leading to the candidate ideal of relations \eqref{e:relsinideal}. In Section~\ref{Sec:crossingrelsforTAMA} we prove in Theorem~\ref{t:relsinideal} that these relations hold in the even TAMA for all $n$. We conclude with Conjecture~\ref{con:maincon}, which states that this ideal is the full ideal of relations for the even TAMA for arbitrary $n$. Finally, in Section~\ref{sec:lastsection} we introduce diagrams associated to monomials in the AMA-Clifford algebra and the even TAMA algebra (Definitions~\ref{d:AMAEdiags} and~\ref{def:TAMAdiagram}) and use them to prove linear dependence for a set of monomials we call \emph{uncrossable monomials}, from which the main conjecture follows in the cases $n=4$ and $n=5$.

\subsection*{Acknowledgments}
We gratefully acknowledge the hospitality and excellent working conditions provided by the Mathematisches Forschungsinstitut Oberwolfach, where KC and MDM were Oberwolfach Research Fellows during the Spring of 2024. MDM thanks the Mathematics Department at Lancaster University for hospitality during a visit and collaboration with KC. MDM acknowledges support from the special research fund (BOF) of Ghent University [BOF20/PDO/058].

\section{Preliminaries} \label{sec:prelims}

Let \((V_0, \sigma_0)\) be a real inner product space. Denote by \((V, \Ip)\) its complexification, where \(\sigma\) is a nondegenerate, symmetric bilinear form on \(V\). Let \(\langle \cdot, \cdot \rangle : V \times V^* \to \mathbb{C}\) be the natural bilinear pairing. Denote by \(\omega\) the natural extension of \(\langle \cdot, \cdot \rangle\) to a symplectic pairing on the vector space \(\Cv = V \oplus V^*\). Specifically, for all \(y, y' \in V\) and \(x, x' \in V^*\), the pairing \(\omega\) satisfies the identities:
\[
\omega(y, y') = 0, \quad \omega(x, x') = 0, \quad \text{and} \quad \omega(y, x) = \langle y, x \rangle = - \omega(x, y).
\]
We shall fix, once and for all, a basis \(\Cb = \{y_1, \dots, y_n, x_1, \dots, x_n\}\) of \(\Cv\) satisfying the properties that \(\{y_i\} \subset V\), \(\{x_j\} \subset V^*\), and 
\[
\langle y_i, x_j \rangle = \delta_{ij} = \omega(y_i, x_j) = \sigma(y_i, y_j).
\]
The pairs \((V, \Ip)\) and \((\Cv, \Sp)\) induce quadratic algebras, which will be central to this paper. We recall their definitions in the following subsections.

\subsection{Weyl Algebra}\label{sec:WeylAlgebra}

Recall that, associated with the symplectic pair \((\Cv, \omega)\), we have the Weyl algebra \(\Cw = \Cw(\Cv, \omega)\), which is the associative algebra defined as the quotient  
\[
\Cw = \Ct(\Cv)/\Ci(\omega),
\]
where \(\Ct(\Cv) = \bigoplus_{m \geq 0} \Ct^m(\Cv)\) is the tensor algebra and \(\Ci(\omega)\) is the two-sided ideal generated by the set
\[
\{v \otimes v' - v' \otimes v - \omega(v, v') \mid v, v' \in \Cv\}.
\]
Equivalently, \(\Cw\) is the complex associative algebra with unit \(1\), generated by the basis \(\Cb\) and subject to the relations
\begin{equation}\label{eq:WeylRels}
    [v, v'] = \omega(v, v')1
\end{equation}
for all \(v, v' \in \Cb\), where $[\cdot,\cdot]$ denotes the commutator.

The Weyl algebra is naturally equipped with a filtration \(\Cf^\bullet(\Cw)\), where, for each integer \(m \geq 0\), the space \(\Cf^m(\Cw)\) is spanned by all products \(v_1 \cdots v_p\) with \(p \leq m\) elements in \(\Cv\). It is clear that \(\Cf^p(\Cw) \Cf^q(\Cw) \subseteq \Cf^{p+q}(\Cw)\). Moreover, from the defining relation \eqref{eq:WeylRels}, it follows that the associated graded algebra
\[
\mathsf{gr}^\bullet(\Cw) = \bigoplus_{m \geq 0} \mathsf{gr}^m(\Cw)
\]
is isomorphic to the symmetric algebra \(\Cs^\bullet(\Cv)\). This isomorphism can be explicitly described via the quantization map \(Q_w: \Cs^\bullet(\Cv) \to \Cw\), defined by
\begin{equation}\label{eq:WeylQuant}
    Q_w(v_1 \cdots v_m) = \frac{1}{m!} \sum_{\sigma \in S_m} v_{\sigma(1)} \cdots v_{\sigma(m)},
\end{equation}
for all \(v_1, \dots, v_m \in \Cv\). As a consequence of the linear isomorphism \(\Cs^\bullet(\Cv) \cong \Cw\), the Weyl algebra admits a linear basis consisting of monomials, which can be expressed in multi-index notation as
\[
y^\alpha x^\beta = y_1^{\alpha_1} \cdots y_n^{\alpha_n} x_1^{\beta_1} \cdots x_n^{\beta_n},
\]
where \(\alpha = (\alpha_1, \dots, \alpha_n)\) and \(\beta = (\beta_1, \dots, \beta_n)\) are elements of \(\mathbb{Z}_{\geq 0}^n\).

\subsubsection{Kostant pairing}

Following Kostant \cite{K01}, we extend the symplectic pairing \(\omega\) on \(\Cv = V \oplus V^*\) to a nondegenerate bilinear form \(\kappa = \kappa(\omega)\) on the symmetric algebra \(S^\bullet(\Cv)\). This extension satisfies the orthogonality condition \(S^p(\Cv) \perp S^q(\Cv)\) for \(p \neq q\) and is otherwise given by
\begin{equation}\label{eq:KostantPairing}
    \kappa(u_1 \cdots u_p, v_1 \cdots v_p) = \sum_{\pi \in S_p} \omega(u_1, v_{\pi(1)}) \omega(u_2, v_{\pi(2)}) \cdots \omega(u_p, v_{\pi(p)})
\end{equation}
for homogeneous elements \(u_1 \cdots u_p, v_1 \cdots v_p \in \Cs^p(\Cv)\). We refer to \(\kappa\) as the {\it Kostant pairing}.  Using the quantization isomorphism, this induces a nondegenerate bilinear form on \(\Cw\). For simplicity, we use the same notation for the pairing in both \(\Cw\) and \(\Cs^\bullet(\Cv)\).

Given a countably infinite-dimensional vector space \(\Cu\) equipped with a nondegenerate bilinear form \(\Bp\), we say that a basis \(\Cb = \{u_1, u_2, \dots\}\) is an {\it orthogonal basis for \(\Bp\)} if, for each \(u_i \in \Cb\), there exists a {\it unique} \(u_j \in \Cb\) such that \(\Bp(u_i, u_j) \neq 0\). Additionally, for a multi-index \(\alpha = (\alpha_1, \dots, \alpha_n) \in \mathbb{Z}_{\geq 0}^n\), we use the standard notations:
\[
x^\alpha = x_1^{\alpha_1} \cdots x_n^{\alpha_n}, \quad |\alpha| = \alpha_1 + \cdots + \alpha_n, \quad \text{and} \quad \alpha! = \alpha_1! \alpha_2! \cdots \alpha_n!.
\]
The following result is well known.

\begin{proposition}
    With respect to the Kostant pairing, the monomial basis \(\{x^\alpha y^\beta \mid \alpha, \beta \in \mathbb{Z}_{\geq 0}^n\}\) is an orthogonal basis of \(\Cw\).
\end{proposition}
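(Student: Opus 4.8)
The plan is to transport the question to the symmetric algebra via the quantization isomorphism \(Q_w \colon \Cs^\bullet(\Cv) \to \Cw\), under which the monomial basis corresponds to the commutative monomials \(x^\alpha y^\beta \in \Cs^{|\alpha|+|\beta|}(\Cv)\) and \(\kappa\) is computed directly by \eqref{eq:KostantPairing}. Since \(\kappa\) makes distinct homogeneous components orthogonal, I first record that \(\kappa(x^\alpha y^\beta, x^{\alpha'}y^{\beta'}) = 0\) unless \(|\alpha|+|\beta| = |\alpha'|+|\beta'|\), so it suffices to analyse the pairing of two monomials of a common total degree \(p\).

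Fix such a pair and regard \(x^\alpha y^\beta\) as the length-\(p\) word in the letters of \(\Cb\) having \(\alpha_i\) occurrences of \(x_i\) and \(\beta_i\) occurrences of \(y_i\), and similarly for \(x^{\alpha'}y^{\beta'}\). In the defining sum \eqref{eq:KostantPairing}, a permutation \(\pi \in S_p\) contributes a nonzero product only if it matches every letter of the first word to a letter of the second on which \(\omega\) is nonzero. By the normalisation \(\omega(y_i,x_j) = \delta_{ij} = -\omega(x_i,y_j)\) and \(\omega(x_i,x_j)=\omega(y_i,y_j)=0\), the only admissible matches send each \(x_i\)-letter of the first word to a \(y_i\)-letter of the second and each \(y_i\)-letter of the first to an \(x_i\)-letter of the second. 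Counting letters index by index, a nonzero contribution can occur only if \(\alpha_i = \beta'_i\) and \(\beta_i = \alpha'_i\) for every \(i\); equivalently \((\alpha',\beta') = (\beta,\alpha)\). Hence each monomial \(x^\alpha y^\beta\) is \(\kappa\)-orthogonal to all monomials except possibly \(x^\beta y^\alpha\), which exhibits the candidate unique partner.

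It then remains to verify nonvanishing at this partner. The admissible permutations are exactly those that, within each index \(i\), biject the \(\alpha_i\) copies of \(x_i\) in the first word with the \(\alpha_i\) copies of \(y_i\) in the second and the \(\beta_i\) copies of \(y_i\) in the first with the \(\beta_i\) copies of \(x_i\) in the second; there are \(\prod_i \alpha_i!\,\beta_i! = \alpha!\,\beta!\) of them. Each such permutation yields the same product \(\prod_i \omega(x_i,y_i)^{\alpha_i}\,\omega(y_i,x_i)^{\beta_i} = (-1)^{|\alpha|}\), so no cancellation occurs and
\[
\kappa(x^\alpha y^\beta, x^\beta y^\alpha) = (-1)^{|\alpha|}\,\alpha!\,\beta! \neq 0.
\]
This confirms both existence and uniqueness of the \(\kappa\)-partner of each basis monomial, which is exactly the claim of orthogonality.

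The argument is essentially routine counting; the only point genuinely needing care is the last one, namely guaranteeing that the \(\alpha!\,\beta!\) surviving terms all carry the same sign so that their sum cannot vanish — this is where the uniform value \((-1)^{|\alpha|}\) of each admissible product is essential. A minor preliminary subtlety worth flagging is that orthogonality here refers to the images \(Q_w(x^\alpha y^\beta)\) of the symmetric monomials, not to the ordered products in \(\Cw\) (the two bases differ by lower-filtration terms); phrasing the computation on the \(\Cs^\bullet(\Cv)\) side sidesteps this entirely.
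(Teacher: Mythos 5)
Your proposal is correct and follows the same route as the paper: the paper's proof simply states the closed formula \(\kappa(x^\alpha y^\beta, x^\mu y^\nu) = (-1)^{|\alpha|}\,\delta_{\alpha,\nu}\,\delta_{\beta,\mu}\,\alpha!\,\beta!\), and your argument is precisely the derivation of that formula from the permanent expansion \eqref{eq:KostantPairing}, including the two points the paper leaves implicit (the letter-matching forcing \((\alpha',\beta')=(\beta,\alpha)\), and the uniform sign \((-1)^{|\alpha|}\) preventing cancellation).
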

\begin{proof}
    For multi-indices \(\alpha, \beta, \mu, \nu \in \mathbb{Z}_{\geq 0}^n\), we compute
    \[
    \kappa(x^\alpha y^\beta, x^\mu y^\nu) = (-1)^{|\alpha|} \delta_{\alpha, \nu} \delta_{\beta, \mu} \alpha! \beta!,
    \]
    which proves the desired claim.
\end{proof}

\subsection{Clifford Algebra}
Focusing on the quadratic pair \((V, \Ip)\), we define the Clifford algebra \(\Cc = \Cc(V, \Ip)\) as the associative algebra given by the quotient  
\[
\Cc = \Ct(V)/\Ci(\Ip),
\]
where \(\Ct(V)\) is the tensor algebra, and the ideal \(\Ci(\Ip)\) is generated by the set\footnote{Following standard conventions, we include the factor \(2\) in the definition of the ideal, despite its asymmetry with the convention used in the Weyl algebra.}
\[
\{y \otimes y' + y' \otimes y - 2\Ip(y, y') \mid y, y' \in V\}.
\]
Equivalently, following standard conventions, let \(\iota: V \to \Cc(V, \Ip) = \Ct(V)/\Ci(\Ip)\) denote the embedding of \(V\) into the Clifford algebra. Then, each basis element \(y_j\) of \(V\) is mapped to a generator \(e_j = \iota(y_j)\) and \(\Cc\) is the complex associative algebra with unit \(1\), generated by \(\{e_j\}\) and subject to the relations
\begin{equation}\label{eq:CliffRels}
    \{e_i, e_j\} = 2\Ip(y_i, y_j)1,
\end{equation}
where \(\{\cdot,\cdot\}\) denotes the anti-commutator. 

The algebra \(\Cc\) is naturally filtered in terms of the generators \(\{e_i\}\), and its associated graded algebra is isomorphic to the exterior algebra \(\bigwedge^\bullet(V)\). The quantization isomorphism  
\(
Q_c: \bigwedge^\bullet(V) \to \Cc
\)
is given by
\begin{equation}\label{eq:CliffQuant}
    Q_c(v_1 \wedge \cdots \wedge v_m) = \frac{1}{m!} \sum_{\pi \in S_m} (-1)^\pi \iota(v_{\pi(1)}) \cdots \iota(v_{\pi(m)}),
\end{equation}
for all \(v_1, \dots, v_m \in V\). 

As with the Weyl algebra, the linear isomorphism \(\bigwedge^\bullet(V) \cong \Cc\) implies that the Clifford algebra admits a linear basis consisting of monomials. These are expressed in multi-index notation as
\[
e^\gamma = e_1^{\gamma_1} \cdots e_n^{\gamma_n}, 
\]
where \(\gamma = (\gamma_1, \dots, \gamma_n) \in \{0,1\}^n\). Occasionally, we may refer to a product of generators in \(\Cc\) by 
\[
e_A = e_{a_1}e_{a_2}\cdots e_{a_k},
\]
associated to a sequence $A = ( a_1,\ldots, a_k)$  of length $k \leq n$ of distinct integers between $1$ and $n$.

Finally, by defining \(\Cc_{\overline{0}}\) (respectively \(\Cc_{\overline{1}}\)) as the span of all those \(e^\gamma\) such that \(|\gamma| = \sum_{i=1}^n\gamma_i\) is even (respectively odd), from the Clifford relation \eqref{eq:CliffRels}, we have the well-defined \(\mathbb{Z}_2\)-grading \(\Cc = \Cc_{\overline{0}} \oplus \Cc_{\overline{1}}\).

\subsubsection{Determinant pairing}

Similarly to the Kostant pairing, given the quadratic pair \((V, \Ip)\), there is a natural extension of \(\Ip\) to a nondegenerate bilinear pairing  \(\delta = \delta(\Ip),\)  
called the {\it determinant pairing}. This pairing satisfies \(\bigwedge^p(V) \perp \bigwedge^q(V)\) for \(p \neq q\) and is otherwise given by  
\begin{equation}\label{eq:DetPairing}
    \delta(u_1 \wedge \cdots \wedge u_p, v_1 \wedge \cdots \wedge v_p) = \sum_{\pi \in S_p} (-1)^\pi \Ip( u_1, v_{\pi(1)}) \Ip( u_2, v_{\pi(2)}) \cdots \Ip( u_p, v_{\pi(p)})
\end{equation}  
on homogeneous elements \(u_1 \wedge \cdots \wedge u_p, v_1 \wedge \cdots \wedge v_p \in \bigwedge^p(V)\).  

Using the quantization isomorphism, this induces a nondegenerate bilinear pairing on \(\Cc\), which we also denote by \(\delta\). We observe that the expression in \eqref{eq:DetPairing} corresponds to the {\it determinant} of the matrix \(\big(\Ip(u_i, v_j)\big)_{i,j}\), whereas \eqref{eq:KostantPairing} can be rephrased as the {\it permanent} of the matrix \(\big(\Sp(u_i, v_j)\big)_{i,j}\).  

It is well known that the (finite) linear basis  
\(
\{e^\gamma \mid \gamma \in \{0,1\}^n\}
\)
of \(\Cc\) is orthogonal with respect to the determinant pairing \(\delta\). 

\subsection{The Weyl-Clifford algebra}\label{sec:WeylCliff}

In the previous subsections, we described the algebras \(\Cw = \Cw(\Cv, \Sp)\) and \(\Cc = \Cc(V, \Ip)\), which arise naturally from a bilinear pairing on a vector space. We now define the Weyl-Clifford algebra as the tensor product over \(\mathbb{C}\) of these algebras:  
\[
\Cw\Cc = \Cw \otimes \Cc.
\]
Note that the \(\mathbb{Z}_2\)-grading of \(\Cc\) induces a likewise grading on the Weyl-Clifford algebra \(\Cw\Cc\).

It is important to note that this algebra depends entirely on \((V, \Ip)\), since from this pair we obtained the symplectic pair \((\Cv, \Sp)\), and from these structures, we constructed both factors of \(\Cw\Cc\). From what has been described so far, it follows that the set  
\[
\{x^\alpha y^\beta\otimes e^\gamma\mid \alpha, \beta\in\mathbb{Z}_{\geq 0}^n, \gamma\in\{0,1\}^n\}
\]  
forms a linear basis of \(\Cw\Cc\). Furthermore, the bilinear pairing  
\(
\Bp = \kappa \otimes \delta
\)
defined by  
\[
\Bp(x^\alpha y^\beta\otimes e^\gamma, x^{\alpha'} y^{\beta'}\otimes e^{\gamma'}) = \kappa(x^\alpha y^\beta, x^{\alpha'} y^{\beta'}) \delta(e^\gamma, e^{\gamma'}),
\]
with $\alpha,\alpha',\beta,\beta'\in\mathbb{Z}_{\geq 0}^n$ and $\gamma,\gamma'\in\{0,1\}^n$ is a nondegenerate pairing on \(\Cw\Cc\). 

\section{Diagrammatic Basis for the Angular Momentum Algebra} \label{s:diagrbasisAMA}

We begin by describing a subalgebra of \(\Cw\), known as the {\it angular momentum algebra} (or simply AMA), which arises as a homomorphic image of the universal enveloping algebra \(\Cu(\mathfrak{so}(n, \mathbb{C}))\).  In this paper, we use the definition \(\mathfrak{so}(n, \mathbb{C}) = \{X \in \End(\bbc^n)\mid X^T+X=0\}\).

\begin{definition} \label{def:AMA}
Let \(\ama\) denote the associative, unital subalgebra of \(\Cw\) generated by the angular momentum operators, defined for all \(1 \leq i < j \leq n\) as  
\begin{equation}\label{eq:AngMomOp}
L_{ij} := x_i y_j - x_j y_i.    
\end{equation}
\end{definition}

It is well known that the set \(\{L_{ij} \mid 1 \leq i < j \leq n\}\) spans a subspace of \(\Cw\) that is closed under the commutator in \(\Cw\) and is isomorphic to the Lie algebra \(\mathfrak{so}(n, \mathbb{C})\).  Hence, the assignment \(X_{ij}:=E_{ij}-E_{ji}\mapsto L_{ij}\) yields a surjective homomorphism  
\[
\Cu(\mathfrak{so}(n, \mathbb{C})) \to \ama,
\]
whose kernel contains the non-homogeneous quadratic ideal of relations generated by  
\begin{equation}\label{eq:CrossRels}
X_{ij}X_{kl} + X_{ik}X_{lj} + X_{il}X_{jk} = 
X_{ij} \delta_{kl} + X_{ik} \delta_{lj} + X_{il} \delta_{jk}   
\end{equation}
for all \(1 \leq i,j,k,l \leq n\). We can further characterize \(\ama\) as the subalgebra of \(\Cw\) that centralizes the harmonic \(\mathfrak{sl}(2)\) realized by the Laplace operator and its dual, multiplication by the norm-square operator (see Ciubotaru-De Martino and Weyl).

Following Feigin-Hakobyan, the relations in \eqref{eq:CrossRels} are referred to as the {\it crossing relations}. In this section, we provide a detailed proof that the ideal generated by \eqref{eq:CrossRels} coincides with the kernel of the homomorphism \(\Cu(\mathfrak{so}(n, \mathbb{C})) \to \ama\). While this result, as well as its generalization in the setting of the rational Cherednik algebra, is known from \cite{FH15} and other sources, we have chosen to include a proof as preparation for the main novelties in the context of the Weyl-Clifford algebra and, more generally, the rational Cherednik-Clifford algebra.

\subsection{Non-crossing diagrams}

Let \(\Pi = \Pi_n = \{(i,j) \in \mathbb{Z}_{\geq 0}^2 \mid 1 \leq i \neq j \leq n\}\). We order the set \(\Pi\) lexicographically and write \((i_1, j_1) \preceq (i_2, j_2)\) to denote this total ordering.  Now consider a regular \(n\)-gon inscribed in the unit circle. For definiteness, we place and label its \(n\) vertices as \(v_j\), where \(j=1,\ldots,n\), at the points  
\(
v_j = e^{2\pi \sqrt{-1} (j-1)/n}
\)  
in the plane.
We interpret a chord \(c = (i, j) \in \Pi\) as an oriented straight line segment starting at \(v_i\) and ending at \(v_j\). Furthermore, to any such $c=(i,j)\in \Pi$, we will naturally associated with it the angular momentum operator \(L_c = L_{ij} \in \ama\) of \eqref{eq:AngMomOp}.

\begin{definition}\label{def:Diagrams}
    A {\it diagram} is a sequence \(D = (c_1, c_2, \ldots, c_p)\) of chords \(c_s \in \Pi\) for \(1 \leq s \leq p\) such that if \(1 \leq s < t \leq p\), then \(c_s \preceq c_t\). The positive integer \(p = |D|\) is called the {\it length} of \(D\). Denote by \(\mathscr{D}_p\) the set of all diagrams of length \(p\) and by \(\mathscr{D}\) the set of all diagrams.
\end{definition}

\begin{figure}[ht]
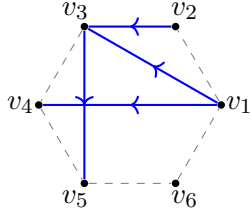

\centering
  \begin{diagram}{6}{
    \draw[midarrow] (v1) -- (v4);
    \draw[midarrow] (v1) -- (v3);
    \draw[midarrow] (v2) -- (v3);
    \draw[midarrow] (v3) -- (v5);
  }
  \end{diagram}
  \caption{Diagram \(((1,3),(1,4),(2,3),(3,5))\) with \(n = 6\) variables.}
  \label{fig:hexdiagram}
\end{figure}

From the definition it follows that \(\mathscr{D} = \bigcup_{p \geq 0} \mathscr{D}_p\).  
Given a diagram \(D = (c_1, c_2, \ldots, c_p) \in \mathscr{D}_p\) with \(c_s = (i_s, j_s) \in \Pi\), we associate to \(D\) an element $L_D\in\ama$ by taking the ordered product
\begin{equation}\label{eq:DiagramMonAMA}
    L_D := L_{c_1} L_{c_2} \cdots L_{c_p} = \prod_{s=1}^p L_{i_s j_s}.
\end{equation}
Furthermore, such diagram \(D\) determines sequences 
\[
I(D) := (i_1, i_2, \ldots, i_p) \quad \text{and} \quad J(D) := (j_1, j_2, \ldots, j_p),
\]
recording respectively the initial and terminal vertices of the chords. By construction, the lexicographic ordering \(c_1 \preceq c_2 \preceq \ldots \preceq c_p\) ensures that the sequence  \(I(D)\) is (weakly) ordered, meaning that \(s < t\) implies \(1 \leq i_s \leq i_t \leq n\). However, this ordering does not necessarily hold for \(J(D)\).  

To capture the combinatorial data of \(D\) in a more algebraic form, we define for each vertex \(q\in\{1,\ldots,n\}\) the multiplicities
\[ 
\alpha_q := \textup{mult}_q(I(D)) \quad \text{and} \quad \beta_q := \textup{mult}_q(J(D)),
\]
that is, \(\alpha_q\) counts how many times \(q\) appears as an initial vertex, and \(\beta_q\) counts how many times \(q\) appears as a terminal vertex.
\begin{definition}
    The {\it associated monomial} \(m_D \in \Cw\) of a diagram \(D\) is given by  
    \[
    m_D := x^{\alpha(D)} y^{\beta(D)},
    \]
    where \(\alpha(D) = (\alpha_1, \ldots, \alpha_n)\) and \(\beta(D) = (\beta_1, \ldots, \beta_n)\).
\end{definition}

Although the assignment \(\mathscr{D} \ni D \mapsto m_D\) is well-defined, it is not necessarily injective. For instance, the diagrams \(D = ((1,3), (2,4))\) and \(D' = ((1,4), (2,3))\) both have  
\[
m_D = x_1 x_2 y_3 y_4 = m_{D'}
\]
as their associated monomial.  This non-uniqueness arises because the combinatorial data recorded by \(m_D\) does not capture how the chords are positioned or whether they cross each other. To address this, we define a subset of diagrams uniquely determined by their associated monomials. Partition \(\Pi = \Pi_+ \cup \Pi_-\), where \((i,j) \in \Pi_+\) if and only if \(i < j\), and let \(\Pi_-\) be the complement.  

\begin{definition}
  A diagram \(D \in \mathscr{D}\) is called a {\it non-crossing diagram} if \(c_s \in \Pi_+\) for all \(1 \leq s \leq |D|\) and if for all \(1 \leq s,t \leq |D|\), the condition
\begin{equation}\label{eq:NonCross}
    i_s < i_t < j_s \quad\Rightarrow\quad  j_t \leq j_s
\end{equation}
is satisfied. Denote by \(\mathscr{D}_+\) the set of all non-crossing diagrams.  
Conversely, we say a diagram is crossed if there exists $1 \leq s,t \leq |D|$ such that 
\begin{equation}\label{eq:Cross}
    i_s < i_t < j_s \quad\text{and}\quad  j_t > j_s
\end{equation}.
\end{definition}

\begin{remark}\label{rem:CrossingRels}
    If we assume \(i < j < k < l\), the relation \eqref{eq:CrossRels} implies that, in \(\ama\),  
    \[
    L_{ik} L_{jl} \equiv L_{ij} L_{kl} + L_{il} L_{jk}
    \]
    modulo lower-order terms in \(\Cw\). Intuitively, this means that crossing chords \(((i,k), (j,l))\) can be rewritten as a sum of two terms corresponding to non-crossing diagrams. Specifically, the sequences \(((i,j), (k,l))\) and \(((i,l), (j,k))\) satisfy \eqref{eq:NonCross}. This will be made precise later, but we depict this situation in Figure \ref{fig:Uncrossing}, below. 
\end{remark}

\begin{figure}[ht]
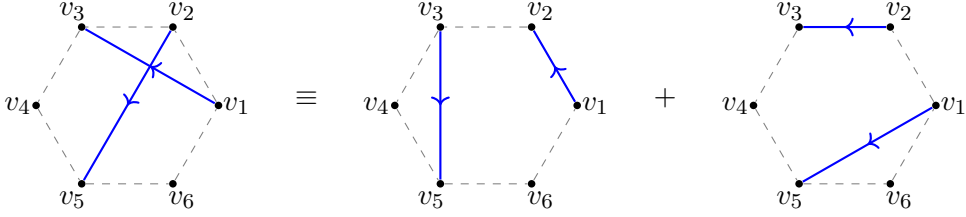

\centering
  \begin{diagram}{6}{
    \draw[midarrow] (v1) -- (v3);
    \draw[midarrow] (v2) -- (v5);
  }
  \end{diagram} \quad \(\equiv\) \quad
  \begin{diagram}{6}{
    \draw[midarrow] (v1) -- (v2);
    \draw[midarrow] (v3) -- (v5);
  }
  \end{diagram}\quad + \quad
  \begin{diagram}{6}{
    \draw[midarrow] (v1) -- (v5);
    \draw[midarrow] (v2) -- (v3);
  }
  \end{diagram}
  \caption{A crossing diagram and its uncrossed factors for \((i,j,k,l)=(1,2,3,5).\)}
  \label{fig:Uncrossing}
\end{figure}

\begin{proposition}\label{prop:Injective}
    The assignment \(\mathscr{D}_+ \ni D \mapsto m_D \in \Cw\) is injective.  
\end{proposition}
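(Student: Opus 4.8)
The plan is to prove injectivity by showing that the non-crossing condition forces the \emph{multiset of chords} of a diagram $D \in \mathscr{D}_+$ to be reconstructible from the data $(\alpha(D),\beta(D))$ alone. Since a diagram is, by definition, the weakly lexicographically ordered sequence of its chords, a diagram is uniquely determined by its multiset of chords; hence it suffices to recover this multiset from $m_D = x^{\alpha(D)} y^{\beta(D)}$. Concretely, if $D, D' \in \mathscr{D}_+$ satisfy $m_D = m_{D'}$, then they share the same multiplicities $\alpha_q$ of initial vertices and $\beta_q$ of terminal vertices, and I would show these determine the chords. I would argue by induction on the length $p = |D|$, the case $p=0$ being trivial.

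For the inductive step, let $j_0$ be the smallest terminal vertex occurring in $D$, i.e.\ the least $q$ with $\beta_q > 0$, and set $b = \beta_{j_0}$. The key claim is that the $b$ chords of $D$ ending at $j_0$ are precisely those joining $j_0$ to the $b$ largest initial vertices lying below $j_0$, counted with multiplicity (that is, reading initial vertices from the largest downward and taking $\alpha_q$ tokens at each position $q$ until $b$ tokens are collected, with the last position contributing possibly only partially). Granting the claim, I would remove these $b$ chords: this produces a non-crossing diagram of length $p-b$ whose associated data $(\alpha',\beta')$ is obtained from $(\alpha,\beta)$ by the forced decrements, and the induction hypothesis determines its multiset of chords, completing the step.

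To establish the claim I would use \eqref{eq:NonCross} as follows. Because $j_0$ is the smallest terminal vertex, every chord has terminal vertex $\ge j_0$, so every initial vertex $q < j_0$ begins a chord $(q, j'')$ with $j'' \ge j_0$. Now suppose $(q, j_0)$ is a chord ending at $j_0$ and let $q'$ be any initial vertex with $q < q' < j_0$, with $(q', j'')$ a chord starting at $q'$. Applying \eqref{eq:NonCross} to the pair $(q,j_0)$ and $(q',j'')$, the hypothesis $q < q' < j_0$ forces $j'' \le j_0$, hence $j'' = j_0$. This shows every chord starting at such a $q'$ also ends at $j_0$; equivalently, the set of initial vertices matched to $j_0$ is \emph{upward closed} among the initial vertices lying below $j_0$. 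Together with the count $b = \beta_{j_0}$, this pins down the matched initial vertices as the top $b$, exactly as claimed.

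The main obstacle is the careful bookkeeping of multiplicities: several chords may terminate at $j_0$, several chords may share an initial vertex, and a diagram may even repeat a chord, so the reconstruction is only determined as a multiset of chords rather than as an assignment of individual endpoint-tokens (the lowest matched initial vertex may be only partially matched to $j_0$, while all strictly intermediate ones are fully matched). The essential use of the non-crossing hypothesis is precisely the upward-closedness established above; once that is in place the reconstruction is forced and the induction runs without further difficulty.
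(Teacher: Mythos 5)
Your proof is correct and follows essentially the same strategy as the paper's: induction on the length $|D|$, focusing on the minimal terminal vertex and using the non-crossing condition \eqref{eq:NonCross} to control which initial vertices can be joined to it. Your formulation via an explicit greedy reconstruction of the chords ending at $j_0$ (the upward-closedness of the matched initial vertices, with a possibly partially matched smallest one) is a clean repackaging of the case analysis the paper performs when directly comparing two diagrams with the same monomial.
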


\begin{proof}
    Let \(D = (c_1, \ldots, c_p)\) and \(D' = (c_1', \dots, c_q')\) be non-crossing diagrams. Denote by \(I = (i_1, \ldots, i_p)\) and \(I' = (i'_1, \ldots, i'_q)\) their respective sequences of initial points. Similarly, let \(J\) and \(J'\) be their sequences of endpoints. The assumption \(m_D = m_{D'}\) implies that \(I = I'\) and \(J = \pi(J')\) for some permutation \(\pi \in S_q\). In particular, this implies \(|D| = p = q = |D'|\).  

    We prove by induction on \(p = |D|\) that for all \(D, D' \in \mathscr{D}_p \cap \mathscr{D}_+\), the condition \(m_D = m_{D'}\) implies \(D = D'\). If \(p = 1\), so \(D = ((i,j))\) and \(D' = ((i',j'))\), the claim is immediate.  

    Now, suppose by induction that for any \(1 < |\hat{D}| = |\hat{D}'| < p\), the assumption \(m_{\hat{D}} = m_{\hat{D}'}\) implies \(\hat{D} = \hat{D}'\). Consider diagrams \(D, D' \in \mathscr{D}_+\) with \(|D| = p = |D'|\) and assume \(m_D = m_{D'}\).   Let \(j = \min(J) = \min(J')\) and let \(s, t\) be indices in \(J\) and \(J'\), respectively, such that \(j_s = j = j'_t\). 
    
    If \(s = t\), then \(i_s = i'_t = i\), and removing the chords \((i,j)\) from both diagrams yields \(\hat{D}, \hat{D'}\) with fewer chords satisfying \(m_{\hat{D}} = m_{\hat{D}'}\), so \(D = D'\) follows by induction. 
    
    If, on the other hand, \(s \neq t\), without loss of generality, we can assume \(s<t\) so that the ordering of \(I=I'\) implies \(i_s\leq i'_t\). Since \(c'_t=(i'_t,j)\in \Pi_+\), we have
    \[
    i_s \leq i'_t=i_t < j=j_s=j'_t.
    \]
Now focus on the pair \(c_t = (i_t,j_t)\succeq c_s = (i_s,j)\). Since \(j=\min(J)\), we have \(j_t\geq j\). If it were the case that both \(j_t > j\) and \(i_s<i'_t\) we would have a crossing since \(i_s<i_t<j_s\) and yet \(j_t>j_s=j\) contradicting \eqref{eq:NonCross}. Hence, \(j_t=j\) or \(i_s = i'_t\). In any case, we can apply the inductive hypothesis to the diagrams \(\hat{D}\) and \(\hat{D'}\) obtained by removing the chords \(c_t\), \(c'_t\) or \(c_s\), \(c'_t\), from which \(D=D'\), finishing the proof.
\end{proof}

\begin{remark}
    The condition that each \(c_s \in \Pi_+\) is crucial for injectivity. If arbitrary orientations were allowed, distinct diagrams such as \(D = ((1,4), (3,2))\) and \(D' = ((1,2), (3,4))\) (see Figure \ref{fig:SameMonomial}) would not have crossings and yet satisfy \(m_D = m_{D'}\).
\end{remark}

\begin{figure}[ht]
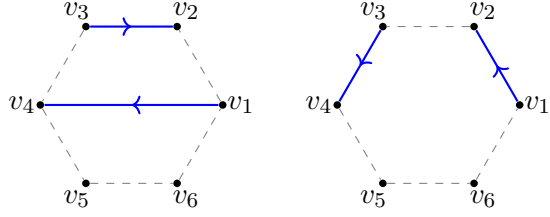

\centering
  \begin{diagram}{6}{
    \draw[midarrow] (v1) -- (v4);
    \draw[midarrow] (v3) -- (v2);
  }
  \end{diagram} \quad 
  \begin{diagram}{6}{
    \draw[midarrow] (v1) -- (v2);
    \draw[midarrow] (v3) -- (v4);
  }
  \end{diagram}
  \caption{Distinct uncrossed diagrams with same monomial \(x_1x_3y_2y_4\).}
  \label{fig:SameMonomial}
\end{figure}

\subsection{Kernel of the homomorphism onto the AMA}

Our goal is to show that the ideal generated by \eqref{eq:CrossRels} coincides with the kernel of the homomorphism \(\Cu(\mathfrak{so}(n,\mathbb{C}))\to\ama\). Since \(\ama\) is generated by the angular momentum operators, the set 
\[
\{L_D \mid D \in \mathscr{D} \}
\]
contains a linear basis for \(\ama\). Since \(L_{ji} = -L_{ij}\) and by repeatedly applying \eqref{eq:CrossRels}, any element \( L_D \) with an arbitrary \( D \in \mathscr{D}_p \) can be expressed as a linear combination of elements in \( \{L_{D'}\} \), where \( D' \in \mathscr{D}_p \cap \mathscr{D}_+ \) or \( |D'| < p \). Consequently, the set  
\[
\{L_D \mid D \in \mathscr{D}_+\}
\]
spans \(\ama\) as a vector space. We now argue that this set forms a linear basis for \(\ama\), which requires proving that \(\{L_D \mid D \in \mathscr{D}_+\}\) is linearly independent.

Before proceeding, we establish additional properties of the sets \(\Pi\) and \(\mathscr{D}\). The partition \(\Pi = \Pi_+ \cup \Pi_-\) naturally defines an involution on \(\Pi\) by reversing chords: for any \( c = (i,j) \in \Pi \), we define \( \overline{c} = (j,i) \). Clearly, if \( c \in \Pi_+ \) with \( i < j \), then the reversed chord satisfies \( \overline{c} \succeq c \) in the lexicographic order of \(\Pi\). 

Since \( c \preceq c' \) defines a total order on \(\Pi\), we extend the lexicographic order to the set \(\mathscr{D}\) of all diagrams, denoting it by \( D \preceq D' \). Naturally, this order satisfies \( D \prec D' \) if \( |D| < |D'| \). Moreover, since each set \( \mathscr{D}_p = \{D \in \mathscr{D} \mid |D| = p\} \) is finite for all \( p \in \mathbb{Z}_{\geq 0} \), it follows that for any given \( D \in \mathscr{D}_p \), there exist only finitely many \( D' \in \mathscr{D}_p \) satisfying \( D' \succ D \).

\begin{proposition}\label{prop:Triangular}
For each \( D \in \mathscr{D}_+ \) with \( |D| = p \in \mathbb{Z}_{> 0} \), we can write
\begin{equation}\label{eq:ChangeBasis}
    L_D \equiv m_D + \sum_{D' \succ D} \lambda_{D'} m_{D'},
\end{equation}
modulo lower order terms in \(\Cw\), where \( \lambda_{D'} \in \mathbb{Z} \) and the sum runs over \( D' \in \mathscr{D}_p \).
\end{proposition}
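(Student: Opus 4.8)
The plan is to compute the top-degree symbol of $L_D$ in the associated graded algebra $\mathsf{gr}^\bullet(\Cw)\cong\Cs^\bullet(\Cv)$ and to match the resulting commutative monomials against associated monomials of diagrams. Since the relation \eqref{eq:ChangeBasis} is asserted only modulo lower-order terms, it is an identity of principal symbols in a single graded piece, and this is what I would verify.

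First I would note that each generator $L_{i_s j_s}=x_{i_s}y_{j_s}-x_{j_s}y_{i_s}$ lies in $\Cf^2(\Cw)$ with nonzero symbol $x_{i_s}y_{j_s}-x_{j_s}y_{i_s}\in\Cs^2(\Cv)$. Because the principal-symbol map is multiplicative whenever the product of leading terms is nonzero (and $\Cs^\bullet(\Cv)$ is a domain), the symbol of $L_D=\prod_{s=1}^p L_{i_s j_s}$ in $\mathsf{gr}^{2p}(\Cw)=\Cs^{2p}(\Cv)$ is the commutative product
\[
\prod_{s=1}^{p}\bigl(x_{i_s}y_{j_s}-x_{j_s}y_{i_s}\bigr).
\]
Expanding this over subsets $S\subseteq\{1,\dots,p\}$, where $s\in S$ means the $s$-th chord is ``reversed'', yields
\[
\sum_{S\subseteq\{1,\dots,p\}}(-1)^{|S|}\prod_{s\notin S}x_{i_s}y_{j_s}\prod_{s\in S}x_{j_s}y_{i_s}.
\]
For each $S$ let $D_S\in\mathscr{D}_p$ be the diagram with chords $(i_s,j_s)$ for $s\notin S$ and $(j_s,i_s)$ for $s\in S$, re-sorted lexicographically; then the $S$-th summand is exactly $(-1)^{|S|}m_{D_S}$, and the empty set $S=\varnothing$ contributes $m_D$.

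The crux is to prove two facts for every nonempty $S$: that $m_{D_S}\ne m_D$, so that $m_D$ occurs with coefficient precisely $1$, and that $D_S\succ D$. The first I would establish by tracking the weighted initial multiplicity $\sum_q q\,\alpha_q=\sum_s i_s$ of $m_D$: since $D\in\mathscr{D}_+$ forces $i_s<j_s$, reversing a chord replaces $i_s$ by $j_s>i_s$, so this quantity strictly increases when $S\ne\varnothing$, whence $\alpha(D_S)\ne\alpha(D)$. I expect the second claim to be the main obstacle, because when chords share a common initial vertex the lexicographic comparison of the sorted chord sequences is decided by tie-breaking on terminal vertices. The clean way through is the monotonicity of order statistics with respect to the total order $(\Pi,\preceq)$: matching each chord $c_s$ of $D$ to its (possibly reversed) counterpart in $D_S$ defines a bijection $\phi$ with $\phi(c_s)\succeq c_s$, equality exactly when $s\notin S$ (for $s\in S$ one has $(i_s,j_s)\prec(j_s,i_s)$). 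Since the $k$-th smallest element of a multiset is a nondecreasing function of its entries, the $k$-th smallest chord $d_k$ of $D_S$ satisfies $d_k\succeq c_k$ for every $k$, giving $D_S\succeq D$ lexicographically; combined with $m_{D_S}\ne m_D$ (so some inequality is strict) this upgrades to $D_S\succ D$.

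Finally I would collect the summands with equal $D_S$, which produces integer coefficients $\lambda_{D'}\in\mathbb{Z}$ (sums of signs $(-1)^{|S|}$) and establishes \eqref{eq:ChangeBasis}. I would remark that only the orientation hypothesis $c_s\in\Pi_+$ is used here; the non-crossing condition \eqref{eq:NonCross} plays no role in this triangularity statement and is needed only for the injectivity of $D\mapsto m_D$ that turns the spanning set into a basis.
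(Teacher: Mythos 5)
Your proof is correct and follows essentially the same route as the paper: both expand $L_D=\prod_{s}(m_{c_s}-m_{\overline{c_s}})$ over the $2^p$ chord-reversal patterns and deduce triangularity from $\overline{c}\succ c$ for $c\in\Pi_+$. You are in fact more careful than the paper on two points it leaves implicit, namely that re-sorting the reversed chords still yields a lexicographically larger diagram (your order-statistics argument) and that $m_{D_S}\neq m_D$ for $S\neq\varnothing$, so that the leading coefficient is exactly $1$.
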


\begin{proof}
Whenever \( c = (i,j) \in \Pi \), let us use the shorthand \( m_c \) instead of \( m_{(c)} \) for the monomial associated with the diagram \( (c) \). If \( c \in \Pi_+ \), then \( L_c = m_c + m_{\overline{c}} \), and since \( c = (i,j) \) with \( i < j \), it follows that \( \overline{c} \succ c \).

Now, given \( D = (c_1, c_2, \ldots, c_p) \in \mathscr{D}_+ \), we have:
\[
L_D = (m_{c_1} - m_{\overline{c_1}})(m_{c_2} - m_{\overline{c_2}}) \cdots (m_{c_p} - m_{\overline{c_p}}).
\]
Thus, the diagrams \( D' \) appearing in the sum are obtained by applying  nontrivial elements of the group  \( C_2^p  \) generated by the chord-reversing involutions to the diagram \( D \). Since reversing any chord \( c_s \mapsto \overline{c_s} \) results in a diagram \( D' \) with \( D' \succ D \), it follows that expanding the product above leads to terms \( D' \) satisfying \( D \prec D'\). This proves equation \eqref{eq:ChangeBasis}.
\end{proof}

\begin{theorem}[\emph{cf.} \cite{FH15}]
The set \( \{L_D \mid D \in \mathscr{D}_+\} \subset \ama \) is linearly independent.
\end{theorem}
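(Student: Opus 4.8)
The plan is to combine the unitriangularity furnished by Proposition~\ref{prop:Triangular} with the injectivity of Proposition~\ref{prop:Injective}, after first passing to the associated graded algebra so that only leading monomials matter. Suppose, for contradiction, that $\sum_{D \in \mathscr{D}_+} c_D L_D = 0$ with not all $c_D$ zero, and let $p$ be the largest length occurring among the $D$ with $c_D \neq 0$. Taking the image in $\mathsf{gr}^p(\Cw) \cong \Cs^p(\Cv)$ annihilates every $L_D$ with $|D| < p$, so it suffices to treat the homogeneous relation $\sum_{|D| = p} c_D\, \sigma_p(L_D) = 0$ in $\Cs^p(\Cv)$, where $\sigma_p(L_D) = m_D + \sum_{D' \succ D} \lambda_{D'} m_{D'}$ is the leading symbol supplied by Proposition~\ref{prop:Triangular}. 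Thus I reduce linear independence in $\ama$ to linear independence of these leading symbols in the symmetric algebra, one degree at a time.

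The essential difficulty is that the diagram order $\prec$ does \emph{not} descend to an order on monomials, since $D \mapsto m_D$ is far from injective on all of $\mathscr{D}$: the off-diagonal diagrams $D'$ above lie in $\mathscr{D}_p$ but generally not in $\mathscr{D}_+$, so one cannot simply invoke a square unitriangular change-of-basis matrix indexed by non-crossing diagrams. To circumvent this I would introduce a grading intrinsic to the monomials, assigning to $x^\alpha y^\beta$ the initial weight $w(x^\alpha y^\beta) = \sum_{q=1}^n q\,\alpha_q$, which makes $\Cs^\bullet(\Cv)$ into a graded algebra. Recalling from the proof of Proposition~\ref{prop:Triangular} that every off-diagonal $m_{D'}$ arises by reversing a nonempty set of chords of $D$, and that reversing a chord $(i,j) \in \Pi_+$ replaces the initial vertex $i$ by $j > i$, each such reversal strictly increases $w$ by $j - i > 0$. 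Hence, inside $\sigma_p(L_D)$, the monomial $m_D$ is the unique term of minimal weight $w(\alpha(D))$, all other terms carrying strictly larger weight.

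With this in hand the argument closes quickly. Set $w_0 = \min\{\, w(\alpha(D)) : |D| = p,\ c_D \neq 0 \,\}$. Extracting the weight-$w_0$ homogeneous component of $\sum_{|D|=p} c_D\, \sigma_p(L_D) = 0$ discards every contribution from diagrams with $w(\alpha(D)) > w_0$, all of whose monomials exceed weight $w_0$, and, for the diagrams with $w(\alpha(D)) = w_0$, retains only the leading term $c_D m_D$. This leaves $\sum_{|D|=p,\ w(\alpha(D)) = w_0} c_D\, m_D = 0$. By Proposition~\ref{prop:Injective} the monomials $m_D$ attached to distinct $D \in \mathscr{D}_+$ are distinct, hence linearly independent in $\Cs^p(\Cv)$, forcing all these $c_D = 0$; this contradicts the attainment of the minimum $w_0$ by some $D$ with $c_D \neq 0$. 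Therefore no nontrivial relation exists.

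The only genuine obstacle is the one handled in the second paragraph: because the off-diagonal terms leave $\mathscr{D}_+$, the naive triangularity statement is not square, and the weight $w$ (equivalently, the lexicographic order on the weakly increasing sequence of initial vertices, which chord reversal likewise strictly increases) is precisely the device that isolates, within the uncontrolled collection of crossing diagrams $D'$, the genuinely non-crossing leading monomials to which injectivity applies. Everything else is bookkeeping on the filtration and the leading-symbol formula.
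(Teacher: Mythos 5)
Your proof is correct, and while it rests on the same two pillars as the paper's --- Proposition~\ref{prop:Injective} for the distinctness of the monomials $m_D$ with $D \in \mathscr{D}_+$, and Proposition~\ref{prop:Triangular} for the leading-term expansion --- the way you close the argument is genuinely different and, in fact, more careful. The paper enumerates $\mathscr{D}_+\cap\mathscr{D}_p$ lexicographically and argues that $m_{D_1}$ cannot occur among the error terms $\varepsilon_{D_s}$, in effect treating the expansion as a square unitriangular change of basis indexed by the diagram order $\prec$. But, as you correctly observe, the off-diagonal diagrams $D'$ produced by chord reversal leave $\mathscr{D}_+$, and $D\mapsto m_D$ is not injective on all of $\mathscr{D}$: for instance $L_{14}L_{23}$ contributes the monomial $x_1x_3y_2y_4 = m_{((1,2),(3,4))}$ via the reversed diagram $((1,4),(3,2))$, so the monomial attached to the lexicographically smallest non-crossing diagram \emph{can} reappear inside another $\varepsilon_{D_s}$. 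Your weight $w(x^\alpha y^\beta)=\sum_q q\,\alpha_q$, which every chord reversal strictly increases, is exactly the device needed to separate the genuine leading terms from the reversed ones; extracting the minimal-weight homogeneous component then reduces cleanly to Proposition~\ref{prop:Injective}. What your route buys is a fully checkable triangularity argument where the paper's version is terse to the point of leaving a gap; what it costs is only the introduction of one auxiliary grading. The single blemish is cosmetic: since each $L_{ij}$ has filtration degree $2$ in $\Cw$, the top symbol of $L_D$ with $|D|=p$ lives in $\mathsf{gr}^{2p}(\Cw)\cong\Cs^{2p}(\Cv)$ rather than $\mathsf{gr}^{p}(\Cw)$.
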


\begin{proof}
Proposition \ref{prop:Injective} establishes a well-defined subset \( \{m_D \mid D \in \mathscr{D}_+\} \) within the monomial basis \( \{x^\alpha y^\beta \mid \alpha, \beta \in \mathbb{Z}_{\geq 0}^n\} \) of \(\Cw\). Since the latter is a basis, the set \( \{m_D \mid D \in \mathscr{D}_+\} \) is linearly independent. 

From Proposition \ref{prop:Triangular}, it follows that \( \{L_D \mid D \in \mathscr{D}_+\} \) is also linearly independent. To see this, consider the subset \( \{L_D \mid D \in \mathscr{D}_+ \cap \mathscr{D}_p\} \) for each \( p \). Enumerate \( \mathscr{D}_+ \cap \mathscr{D}_p \) lexicographically as \( D_1 \prec D_2 \prec \cdots \prec D_M \), where \( M = |\mathscr{D}_+ \cap \mathscr{D}_p| \). By Proposition \ref{prop:Triangular}, each \( L_{D_s} \) can be written as
\[
L_{D_s} = m_{D_s} + \varepsilon_{D_s},
\]
where \( \varepsilon_{D_s} \) is a linear combination of monomials of lower filtration order or corresponding to diagrams greater than \( D_s \) in the lexicographic order.

Now, if \(\sum_s \lambda_s L_{D_s} = 0 = \sum_s \lambda_s (m_{D_s} + \varepsilon_{D_s})\), starting with the smallest index, from Proposition \ref{prop:Triangular}, there can be no linear dependence between \(m_{D_1}\) and a summand of 
\(\varepsilon_{D_s}\) for \(s>1\) which implies \(\lambda_1=0\). Proceeding by induction, the result follows.
\end{proof}

\section{Analog of crossing relations for the total angular momentum algebra} \label{Sec:crossingrelsforTAMA}

In the previous section, we discussed the AMA, a subalgebra of \(\Cw\) that arises as the centraliser algebra of the harmonic \(\mathfrak{sl}(2)\) subalgebra of the Weyl algebra that contains the Laplace operator. It serves as the infinitesimal realization of the orthogonal group appearing in the harmonic Howe dual pair \((O(n),\mathfrak{sl}(2))\) within the Weyl algebra. 

In this section, we focus on the related dual pair \((\mathrm{Pin}(n),\mathfrak{spo}(2|1))\), which is realized in the Weyl-Clifford algebra. The harmonic \(\mathfrak{spo}(2|1)\) Lie algebra is generated by the operators
\[
\partial_{\underline{x}} = \sum_j y_j \otimes e_j, \qquad
\underline{x} = \sum_j x_j \otimes e_j,
\]
where \(\partial_{\underline{x}}\) is the well-known Dirac operator, which acts as an operator square root of the Laplacian (add references). This algebra comes equipped with a natural \(\mathbb{Z}_2\)-grading induced by the grading on \(\Cw\Cc\), that is, \(\mathfrak{spo}(2|1)_{\overline \jmath} = \mathfrak{spo}(2|1) \cap \Cw\Cc_{\overline \jmath}\), with \({\overline \jmath} \in \{\overline 0, \overline 1\}\).

\begin{definition}\label{def:TAMA}
    The subalgebra of \(\Cw\Cc\) that centralizes the harmonic \(\mathfrak{spo}(2|1)\) Lie superalgebra is called the Total Angular Momentum Algebra, or TAMA, in short. It is denoted by \(\tama\). This is a \(\mathbb{Z}_2\)-graded associative algebra whose grading is inherited from the \(\mathbb{Z}_2\)-grading of \(\Cc\).
\end{definition}

\subsection{Generators of the TAMA and the even subalgebra}\label{s:EvenTAMA}
We now briefly describe what is known about the structure of the algebra $\tama$. Consider the extremal projector 
\[
P = 1 - \frac{1}{2}\partial_{\underline{x}}\underline{x} \in \Cw\Cc
\]
and let \(\ad_P\) denote the adjoint action (via the graded commutator) of \(P\) on \(\tama\). It is clear that \(\ad_P(a) = a\) for any element \(a\in\tama\), since \(\partial_{\underline{x}}\) and \(\underline{x}\) generate the ortho-symplectic Lie superalgebra and that
\begin{equation}\label{eq:Palgtrick}
    \ad_P(ab) = a\ad_P(b)
\end{equation}
whenever \(a\in\tama\) and \(\in\Cw\Cc\).

\begin{proposition}\label{prop:TamaDescriptionP}
    The TAMA is given as \(\tama = \ad_P(\Cent_{\Cw\Cc}(\mathfrak{spo}(2|1)_{\overline{0}}))\). 
\end{proposition}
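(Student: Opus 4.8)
The plan is to prove the two inclusions of $\tama = \ad_P(\Cent_0)$ separately, where I write $\Cent_0$ for $\Cent_{\Cw\Cc}(\mathfrak{spo}(2|1)_{\overline{0}})$; the inclusion $\tama \subseteq \ad_P(\Cent_0)$ is formal, while $\ad_P(\Cent_0) \subseteq \tama$ carries the content. For the first, observe that $\mathfrak{spo}(2|1)_{\overline{0}} \subseteq \mathfrak{spo}(2|1)$, so any element centralising the whole superalgebra centralises its even part; hence $\tama \subseteq \Cent_0$. Using the identity $\ad_P(a) = a$ valid for $a \in \tama$, each such $a$ is exhibited as $\ad_P$ of the element $a \in \Cent_0$, giving $\tama \subseteq \ad_P(\Cent_0)$.

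For $\ad_P(\Cent_0) \subseteq \tama$, fix $b \in \Cent_0$, which we may assume homogeneous, and show that $\ad_P(b)$ graded-commutes with all of $\mathfrak{spo}(2|1)$. Since $\mathfrak{spo}(2|1)$ is generated as a Lie superalgebra by its two odd elements $\partial_{\underline{x}}$ and $\underline{x}$ --- their graded brackets recovering, up to scalars, the even generators $\partial_{\underline{x}}^2 = \Delta$, $\underline{x}^2$ and $\{\partial_{\underline{x}},\underline{x}\}$ (twice the Euler operator $\sum_j x_j y_j$ plus $n$) that span $\mathfrak{spo}(2|1)_{\overline{0}}$ --- the super-Jacobi identity reduces the task to showing that $\ad_P(b)$ graded-commutes with $\partial_{\underline{x}}$ and with $\underline{x}$. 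The form $P = 1 - \tfrac12\partial_{\underline{x}}\underline{x}$ treats these two generators asymmetrically, and I expect one of the two graded-commutations to hold for every $b \in \Cw\Cc$ straight from the defining form of $P$, the projector being built so that its adjoint action annihilates one odd root direction. The remaining graded-commutation is where $b \in \Cent_0$ is indispensable: expanding it through the ortho-symplectic relations leaves residual terms in which $b$ is bracketed against $\Delta$, $\underline{x}^2$ and $\sum_j x_j y_j$, all of which vanish because $b$ centralises $\mathfrak{spo}(2|1)_{\overline{0}}$. With both graded-commutations established, $\ad_P(b) \in \tama$, and equality follows.

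The main obstacle is this last computation: commuting the remaining odd generator past $\ad_P(b)$ and confirming that every surviving term is absorbed by the even-centralising hypothesis on $b$. This calls for careful use of the relations $\partial_{\underline{x}}^2 = \Delta$, $\underline{x}^2$ and $\{\partial_{\underline{x}},\underline{x}\} = 2\sum_j x_j y_j + n$ and for tracking the Clifford-degree signs in the graded commutators, with the left $\tama$-linearity \eqref{eq:Palgtrick} and the identity $\ad_P(a) = a$ serving as the structural inputs that make the rearrangements close up. It is exactly the asymmetry of $P$ in $\partial_{\underline{x}}$ and $\underline{x}$ that forces the restriction to $\Cent_0$ rather than all of $\Cw\Cc$.
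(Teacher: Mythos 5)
Your overall strategy is the paper's: the easy inclusion follows from $\Cent_{\Cw\Cc}(\mathfrak{spo}(2|1))\subseteq\Cent_{\Cw\Cc}(\mathfrak{spo}(2|1)_{\overline{0}})$ together with $\ad_P(a)=a$ for $a\in\tama$, and the reverse inclusion is reduced, exactly as in the paper, to showing $\ad_{\partial_{\underline{x}}}(\ad_P(b))=0=\ad_{\underline{x}}(\ad_P(b))$ for $b$ in the even centraliser, via the super-Jacobi identity. However, the way you plan to execute that step contains a concrete error: it is \emph{not} the case that one of the two graded-commutations holds for every $b\in\Cw\Cc$ ``straight from the defining form of $P$.'' Writing $F^-=\partial_{\underline{x}}$, $F^+=\underline{x}$, so that $\ad_P=\mathrm{id}-\tfrac12\,\ad_{F^-}\ad_{F^+}$, take $b=x_1\otimes 1$: then $\ad_{F^+}(b)=0$, hence $\ad_P(b)=b$, yet $\ad_{F^-}(\ad_P(b))=1\otimes e_1\neq 0$. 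Take instead $b=y_1\otimes 1$: then $\ad_P(b)=2b$ and $\ad_{F^+}(\ad_P(b))=-2\otimes e_1\neq 0$. So neither odd direction is annihilated for free; both vanishings genuinely use the hypothesis $b\in\Cent_{\Cw\Cc}(\mathfrak{spo}(2|1)_{\overline{0}})$.

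The computation that does work, and which you should substitute for your ``asymmetry'' heuristic, is the following. Set $H=\{F^+,F^-\}$ and let $a$ centralise $\mathfrak{spo}(2|1)_{\overline{0}}=\mathrm{span}\{(F^+)^2,(F^-)^2,H\}$. Super-Jacobi for odd elements gives $\ad_{F^+}\ad_{F^-}+\ad_{F^-}\ad_{F^+}=\ad_H$ and $\ad_{F^\pm}\ad_{F^\pm}=\ad_{(F^\pm)^2}$, whence
\[
\ad_{F^+}\ad_{F^-}\ad_{F^+}(a)=\ad_H\ad_{F^+}(a)-\ad_{F^-}\ad_{(F^+)^2}(a)=\ad_{[H,F^+]}(a)=2\,\ad_{F^+}(a),
\]
using both $\ad_{(F^+)^2}(a)=0$ and $\ad_H(a)=0$, and similarly
\[
\ad_{F^-}\ad_{F^-}\ad_{F^+}(a)=\ad_{(F^-)^2}\ad_{F^+}(a)=\ad_{[(F^-)^2,F^+]}(a)=2\,\ad_{F^-}(a),
\]
using $\ad_{(F^-)^2}(a)=0$ and $[(F^-)^2,F^+]=2F^-$. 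Substituting into $\ad_{F^\pm}(\ad_P(a))=\ad_{F^\pm}(a)-\tfrac12\,\ad_{F^\pm}\ad_{F^-}\ad_{F^+}(a)$ gives zero in both cases. With this correction your argument closes up and coincides with the paper's proof.
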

\begin{proof}
    This was proven by Oste in his unpublished preprint. We sketch the proof here for convenience. Since \(\mathfrak{spo}(2|1)_{\overline{0}}\subset \mathfrak{spo}(2|1)\), it is clear that \(\Cent_{\Cw\Cc}(\mathfrak{spo}(2|1))\subset \Cent_{\Cw\Cc}(\mathfrak{spo}(2|1)_{\overline{0}})\). It is then a straightforward computation using the super-Jacobi identity to show that
    \[
    \ad_{\partial_{\underline{x}}}(P(a)) = 0 = \ad_{\underline{x}}(P(a))
    \]
    whenever \(a \in \Cent_{\Cw\Cc}(\mathfrak{spo}(2|1)_{\overline{0}})\), which proves the inclusion in the other direction.
\end{proof}

From the last proposition and from the fact that \(\mathfrak{spo}(2|1)_{\overline{0}}\cong \mathfrak{sl}(2)\) is entirely contained in the Weyl algebra, it follows that \(\Cent_{\Cw\Cc}(\mathfrak{spo}(2|1)_{\overline{0}}) = \Cent_{\Cw}(\mathfrak{spo}(2|1)_{\overline{0}})\otimes \Cc = \ama \otimes \Cc\). 
We can thus explicitly realize several elements of \(\tama\) by applying \(\ad_P\) to suitable elements of \(\Cw\Cc\). 
Following De Bie, van der Jeugt and Oste, for each basic element \(e^\gamma\), we obtain an element in \(\tama\) after applying \(-\tfrac{1}{2}\ad_P\). The normalizing factor of \(-\tfrac{1}{2}\) is there to ensure that, when \(1\leq i < j \leq n\), then
\begin{equation}\label{eq:2IndexSymmetries}
O_{ij} := -\frac{1}{2}\ad_P(e_ie_j) = L_{ij} + \frac{1}{2}e_ie_j
\end{equation} 
corresponds to a standard generator of the diagonal embedding of \(\mathfrak{so}(n,\bbc)\) in \(\Cw\Cc\). Following De Bie et al, the elements of \(\tama\) described in \eqref{eq:2IndexSymmetries} are called 2-index symmetries. More generally, a $k$-index symmetry, with \(2\leq k \leq n\) is given by
\begin{equation}\label{eq:kIndexSymmetries}
O_{a_1\cdots a_k} := -\frac{1}{2}\ad_P(e_{a_1}\cdots e_{a_k}) = \frac{k-1}{2} e_{a_1}\cdots e_{a_k} - \sum_{1\leq p < q \leq k}L_{a_pa_q}e_{a_p}e_{a_q}(e_{a_1}\cdots e_{a_k}),
\end{equation} 
with \(1\leq a_1<a_2<\cdots<a_k\leq n\). It will be convenient to write this ordered sequence of indices by $A=(a_1,a_2,\ldots,a_k)$ and write \[O_A = O_{a_1\cdots a_k}\] 
for the $k$-index symmetry of \eqref{eq:kIndexSymmetries}. Finally, note also that
\[
\ad_P(e_i) = e_i - \frac{1}{2} \ad_{\partial_{\underline x}}(\ad_{\underline x}(e_i)) = e_i - \ad_{\partial_{\underline x}}(x_i) = e_i-e_i = 0,
\] 
for all \(1\leq i \leq n\). The precise description of the \(k\)-index symmetries in \eqref{eq:kIndexSymmetries} imply the following structural result.

\begin{theorem}\label{thm:TamaGeneration}
    As an associative algebra, we have that \(\tama\) is generated by the 2-index symmetries \(\{O_{ij}\mid 1\leq i<j\leq n\}\) and the 3-index symmetries \(\{O_{ijk}\mid 1\leq i<j<k\leq n\}\).
\end{theorem}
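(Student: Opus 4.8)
Write $\mathcal{A}\subseteq\tama$ for the associative subalgebra generated by the $2$-index and $3$-index symmetries. The plan is to establish the two statements
\[
\tama = \langle\, O_A : 2\le |A|\le n\,\rangle \qquad\text{and}\qquad O_A\in\mathcal{A}\ \text{ for every } A,
\]
whose combination gives $\tama=\mathcal{A}$. The first recovers (and re-proves) Oste's generation result, while the second, proved by induction on $|A|$, carries the real content.

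For the first statement I would use Proposition~\ref{prop:TamaDescriptionP} together with the identification $\Cent_{\Cw\Cc}(\mathfrak{spo}(2|1)_{\overline{0}})=\ama\otimes\Cc$: since $\{L_D : D\in\mathscr{D}_+\}$ is a basis of $\ama$ and $\{e_A\}$ a basis of $\Cc$, the algebra $\tama$ is spanned by the elements $\ad_P(L_D\otimes e_A)$. I would show each lies in $\langle O_A\rangle$ by induction on $p=|D|$, uniformly in $A$. Writing $D=(c_1,\dots,c_p)$ and expanding the product of $2$-index symmetries via \eqref{eq:2IndexSymmetries},
\[
O_{c_1}\cdots O_{c_p} = L_D + R, \qquad R=\sum_{\emptyset\ne S\subseteq\{1,\dots,p\}} L_{D\setminus S}\otimes \tfrac{1}{2^{|S|}}\!\!\prod_{s\in S}e_{c_s},
\]
so that every summand of $R$ has the form $L_{D'}\otimes(\text{Clifford monomial})$ with $|D'|<p$. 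Using \eqref{eq:Palgtrick} and $\ad_P(e_A)=-2O_A$ this yields
\[
\ad_P(L_D\otimes e_A) = \ad_P\big((O_{c_1}\cdots O_{c_p})\,e_A\big) - \ad_P(R\,e_A) = -2\,O_{c_1}\cdots O_{c_p}\,O_A - \ad_P(R\,e_A),
\]
where the first term is in $\langle O_A\rangle$ and $\ad_P(R\,e_A)$ is a linear combination of elements $\ad_P(L_{D'}\otimes e_{B})$ with $|D'|<p$, handled by induction; the base case $p=0$ is $\ad_P(1\otimes e_A)=-2O_A$.

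For the second statement I would induct on $k=|A|$ in steps of two, the cases $k=2,3$ being the generators. The engine is the recursion
\[
O_{A}=\frac{2}{k-3}\sum_{j=2}^{k}(-1)^{j}\,O_{a_1 a_j}\,O_{A\setminus\{a_1,a_j\}}\qquad (k\ge 4),
\]
in which each $O_{a_1a_j}$ is a $2$-index generator and each $O_{A\setminus\{a_1,a_j\}}$ is a $(k-2)$-index symmetry, hence in $\mathcal{A}$ by the inductive hypothesis. Using $O_{a_1a_j}\in\tama$, the relation \eqref{eq:Palgtrick}, and $O_B=-\tfrac12\ad_P(e_B)$, this recursion is equivalent to the single vanishing statement
\[
\ad_P(W_A)=0,\qquad W_A:=e_A+\sum_{j=2}^{k}(-1)^{j}\,L_{a_1 a_j}\,e_{A\setminus\{a_1,a_j\}}\in\ama\otimes\Cc,
\]
that is, to $W_A$ lying in the kernel of the projection $\ad_P$ (which is nontrivial on $\ama\otimes\Cc$, since $\ad_P$ merely surjects onto $\tama$).

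The main obstacle is this vanishing. I would first record that each $O_A$ is homogeneous of degree $|A|$ for the filtration of $\Cw\Cc$ by \emph{total degree} (Weyl degree plus Clifford degree), and that in each product $O_{a_1a_j}\,O_{A\setminus\{a_1,a_j\}}$ the two index sets are disjoint, so the product is genuinely homogeneous of total degree $k$ with no lower-order corrections. Hence the recursion is an identity between homogeneous elements, and it suffices to match Clifford-degree components: the top component (Clifford degree $k$) matches by the sign computation $e_{a_1}e_{a_j}e_{A\setminus\{a_1,a_j\}}=(-1)^{j}e_A$, which reproduces the coefficient $\tfrac{k-1}{2}$ of \eqref{eq:kIndexSymmetries}, while the lower components, after collecting the spurious products $L_{a_1a_j}L_{a_r a_s}$, reduce to the crossing relations \eqref{eq:CrossRels}, which force their cancellation (for $k=4$ this is a single application of \eqref{eq:CrossRels}, yielding the exact identity $O_{ijkl}=2(O_{ij}O_{kl}-O_{ik}O_{jl}+O_{il}O_{jk})$). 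Equivalently, one verifies $\ad_P(W_A)=0$ directly from $\ad_P=\mathrm{id}-\tfrac12\ad_{\partial_{\underline x}}\ad_{\underline x}$ by computing $\ad_{\underline x}(W_A)$ and then $\ad_{\partial_{\underline x}}$ of the result. The delicate part is the bookkeeping of signs and the systematic use of \eqref{eq:CrossRels} to cancel the double-$L$ terms for general $k$.
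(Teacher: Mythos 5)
Your two-step reduction is the same as the paper's: first show $\tama$ is spanned by products of $2$-index symmetries times a single $k$-index symmetry (via $\tama=\ad_P(\ama\otimes\Cc)$ and the substitution $L_{ij}=O_{ij}-\tfrac12 e_ie_j$ from \eqref{eq:2IndexSymmetries} together with \eqref{eq:Palgtrick}), then express each $O_A$ with $|A|\geq 4$ quadratically in terms of $2$-index and $(|A|-2)$-index symmetries and induct downward in steps of two. Your step~1 is a more explicit version of the paper's argument and is fine. The substantive divergence is in how the quadratic recursion is established, and here you have made the easy part hard: the paper obtains its recursion \eqref{eq:higherksymm} in one line by substituting $L_{a_pa_q}=O_{a_pa_q}-\tfrac12 e_{a_p}e_{a_q}$ directly into the closed formula \eqref{eq:kIndexSymmetries} (rewritten via Lemma~\ref{lem:CliffIds}), applying $-\tfrac12\ad_P$, and using \eqref{eq:Palgtrick}; since \eqref{eq:kIndexSymmetries} contains only one $L$-factor per term, no double-$L$ products ever appear, so the crossing relations \eqref{eq:CrossRels} and your Clifford-degree-by-degree matching are not needed at all. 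By contrast, you posit the variant recursion
\[
O_{A}=\frac{2}{k-3}\sum_{j=2}^{k}(-1)^{j}\,O_{a_1 a_j}\,O_{A\setminus\{a_1,a_j\}},
\]
summed only over pairs containing $a_1$ (the paper sums over all pairs, with coefficient $\tfrac{4}{k(k-3)}$; the two are reconciled by the antisymmetry of $O_A$ in its indices), correctly reduce it to the vanishing $\ad_P(W_A)=0$, and then explicitly flag the verification of that vanishing as ``the main obstacle'' with ``delicate bookkeeping'' left undone. That is the one genuinely incomplete step of your write-up, and it is precisely the step that disappears if you derive the recursion from \eqref{eq:kIndexSymmetries} as the paper does rather than by expanding the products $O_{a_1a_j}O_{A\setminus\{a_1,a_j\}}$ from scratch. (A minor independent caveat: your exact constants, e.g.\ $O_{ijkl}=2(O_{ij}O_{kl}-O_{ik}O_{jl}+O_{il}O_{jk})$, do not match the paper's normalization in \eqref{eq:4-index}; this does not affect the generation statement but should be pinned down.)
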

\begin{proof}
    This was first proved by Oste in the setting of a rational Cherednik algebra. For convenience of the reader, we sketch the argument. Let \(\tama'\subseteq \tama\) denote the set of elements of \(\tama\) obtained as linear combinations of monomials involving finite products of 2- and 3-index symmetries. We will show that \(\tama\subseteq \tama'\).
    
    We have seen above that \(\tama = \ad_P(\ama\otimes \Cc)\). A general element in \(\ama\otimes \Cc\) is expressed as a linear combination of elements \(M\otimes e^\gamma\), where \(M\) is a finite product of the generators \(\{L_{ij}\mid 1\leq i<j\leq n\}\) of the \(\ama\). From \eqref{eq:2IndexSymmetries} we have \(L_{ij} = O_{ij}-\tfrac{1}{2}e_ie_j\), and since \(\ad_P(O_{ij}) = O_{ij}\), it follows that a generic element in \(\tama = \ad_P(\ama\otimes \Cc)\) is written as a linear combination of elements \(M_O\otimes \ad_P(e^\gamma)\), where \(M_O\) is a finite product of the 2-index symmetries \(\{O_{ij}\mid 1\leq i< j \leq n\}\). In other words, a general element of \(\tama\) can always be expressed as a linear combination of elements \(M_O\otimes O_A\), with $A=(a_1,a_2,\ldots,a_k)$. Hence, it suffices to show that any \(k\)-index symmetry, with \(k>3\), is contained in \(\tama'\). For that, by substituting \(L_{i_pi_q} = O_{i_pi_q}-\tfrac{1}{2}e_{i_p}e_{i_q}\) in \eqref{eq:kIndexSymmetries} and applying \(-\tfrac{1}{2}\ad_P\) to the resulting equation, and using \eqref{eq:Palgtrick}, we obtain, 
    \begin{equation}\label{eq:higherksymm}
    O_{A} = \frac{4}{k(k-3)}\sum_{1\leq p < q \leq k}(-1)^{p+q+1}O_{a_pa_q}O_{a_1\cdots \hat a_p\cdots \hat a_q \cdots a_k}
    \end{equation}
    for \(1\leq p < q \leq k\), \(k\geq 4\) and where \(\hat{a}\) means the corresponding index is removed. With a simple inductive argument, we conclude that any \(k\)-index symmetry is in \(\tama'\), and we are done.
\end{proof}

With the previous theorem in hand, it is useful to have formulas for the commutation between 2- and 3-index symmetries.

\begin{proposition}\label{prop:2and3commutation}
    Let \(\{i,j\}\) and \(\{p,q,r\}\) be sets of indices of cardinality 2 and 3, respectively. If \(|\{i,j\}\cap\{p,q,r\}|\) is even, then \(O_{ij}O_{pqr} = O_{pqr}O_{ij}\). Otherwise,
    \[
    O_{ij}O_{jqr} = O_{iqr}.
    \]
\end{proposition}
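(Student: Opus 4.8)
The plan is to deduce both statements from the explicit realisations in \eqref{eq:2IndexSymmetries} and \eqref{eq:kIndexSymmetries}, working inside \(\ama\otimes\Cc\) and using repeatedly that the Weyl factor commutes with the Clifford factor in \(\Cw\Cc\). Specialising \eqref{eq:kIndexSymmetries} to \(k=3\) gives \(O_{jqr}=e_je_qe_r+L_{jq}e_r-L_{jr}e_q+L_{qr}e_j\), while \(O_{ij}=L_{ij}+\tfrac12 e_ie_j\). I would organise the argument by the parity of \(m=|\{i,j\}\cap\{p,q,r\}|\), since \(m\) governs whether the Clifford monomials \(e_ie_j\) and \(e_pe_qe_r\) commute (\(m\) even) or anticommute (\(m\) odd), and the only reductions needed are the Clifford relations \eqref{eq:CliffRels} and the crossing relations \eqref{eq:CrossRels} for products of angular momenta.

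For \(m=0\) all indices occurring in the two factors are distinct, so \(L_{ij}\) commutes with each of \(L_{pq},L_{pr},L_{qr}\) by disjointness of supports in \(\mathfrak{so}(n,\bbc)\), while \(e_ie_j\) commutes with \(e_pe_qe_r\) since it is moved past an even number of anticommuting generators; hence \(O_{ij}O_{pqr}=O_{pqr}O_{ij}\). For \(m=2\), using that \(O_{pqr}\) changes only by an overall sign under reordering of its indices, I may assume \(\{p,q\}=\{i,j\}\) and must show \([O_{ij},O_{ijr}]=0\). Here I would expand both orders of the product, collapse the repeated pair using \(e_i^2=e_j^2=1\) from \eqref{eq:CliffRels}, and verify that the Clifford-degree three, one and zero components each cancel between the two orders, the lower-degree cancellations relying only on the \(\mathfrak{so}\)-relations among \(L_{ij},L_{ir},L_{jr}\).

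For the odd case \(m=1\) the target is the product identity \(O_{ij}O_{jqr}=O_{iqr}\), with \(i,j,q,r\) distinct and \(j\) the shared index. Rather than expand blindly, I would exploit that \(O_{ij}\in\tama\) together with the extremal-projector identity \eqref{eq:Palgtrick}: since \(O_{jqr}=-\tfrac12\ad_P(e_je_qe_r)\), this gives \(O_{ij}O_{jqr}=-\tfrac12\ad_P(O_{ij}\,e_je_qe_r)\). Using \(e_j^2=1\) to collapse the shared generator yields \(O_{ij}\,e_je_qe_r=L_{ij}e_je_qe_r+\tfrac12 e_ie_qe_r\), and since \(-\tfrac12\ad_P(e_ie_qe_r)=O_{iqr}\) (after ordering the indices), the identity reduces to the single statement
\[
\ad_P\bigl(L_{ij}e_je_qe_r\bigr)=-\,O_{iqr}.
\]

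The crux is therefore to evaluate \(\ad_P(L_{ij}e_je_qe_r)\). Expanding \(\ad_P=\mathrm{id}-\tfrac12\ad_{\partial_{\underline{x}}}\ad_{\underline{x}}\) as in Section~\ref{s:EvenTAMA}, I would first compute the graded commutator \(\ad_{\underline{x}}(L_{ij}e_je_qe_r)\) via the Leibniz rule, using \([\underline{x},L_{ij}]=x_je_i-x_ie_j\) and \eqref{eq:CliffRels}, then apply \(\ad_{\partial_{\underline{x}}}\) to the result, and finally reduce the quadratic angular-momentum products \(L_{ij}L_{jq}\) and \(L_{ij}L_{jr}\) that appear by means of the crossing relations \eqref{eq:CrossRels} (cf. Remark~\ref{rem:CrossingRels}). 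The main obstacle is precisely this computation: one must track the Clifford signs incurred when the repeated index \(j\) is eliminated and when the surviving generators are reordered into the standard increasing word, and then check that after applying \eqref{eq:CrossRels} the single angular momenta \(L_{iq},L_{ir}\) together with \(L_{qr}\) assemble exactly into \(O_{iqr}\), leaving no residual higher Clifford-degree term and no leftover quadratic angular momenta.
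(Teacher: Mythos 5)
Your even cases are fine: for $m=0$ the supports are disjoint and the Weyl factor commutes with the Clifford factor, and for $m=2$ the expansion you sketch does close up (the Clifford-degree-one terms $L_{jr}e_j+L_{ir}e_i$ coming from $[L_{ij},O_{ijr}]$ cancel against $-L_{ir}e_i-L_{jr}e_j$ coming from $[\tfrac12 e_ie_j,O_{ijr}]$). This is heavier than, but consistent with, the paper's uniform one-line argument.

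The genuine gap is your odd case. The displayed identity must be read as a commutator identity, $O_{ij}O_{jqr}-O_{jqr}O_{ij}=O_{iqr}$: the first clause of the proposition is itself a commutator statement, and the paper's proof computes
\[
[O_{ij},O_{pqr}] \;=\; -\tfrac{1}{2}\ad_P\bigl([O_{ij},e_pe_qe_r]\bigr) \;=\; -\tfrac{1}{4}\ad_P\bigl([e_ie_j,e_pe_qe_r]\bigr),
\]
which reduces all three cases to elementary Clifford commutators, with $[e_ie_j,e_je_qe_r]=2e_ie_qe_r$ giving the odd case. As a literal product identity, $O_{ij}O_{jqr}=O_{iqr}$ is \emph{false}: in $\mathsf{gr}(\Cw\Cc)$ the product contains $\mathsf{L}_{ij}\mathsf{L}_{jq}\mathsf{e}_r-\mathsf{L}_{ij}\mathsf{L}_{jr}\mathsf{e}_q+\mathsf{L}_{ij}\mathsf{L}_{qr}\mathsf{e}_j$, three nonzero monomials with distinct Clifford factors that cannot cancel and sit in Weyl-filtration degree $4$, whereas $O_{iqr}$ sits in degree at most $2$. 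Your reduction to $\ad_P(L_{ij}e_je_qe_r)=-O_{iqr}$ is this same false statement in disguise: writing $L_{ij}=O_{ij}-\tfrac12 e_ie_j$ and using \eqref{eq:Palgtrick} (which applies to $O_{ij}\in\tama$, but \emph{not} to $L_{ij}\notin\tama$, so you cannot pull $L_{ij}$ out of $\ad_P$) gives
\[
\ad_P(L_{ij}e_je_qe_r) \;=\; O_{ij}\,\ad_P(e_je_qe_r)-\tfrac12\ad_P(e_ie_qe_r) \;=\; -2\,O_{ij}O_{jqr}+O_{iqr},
\]
so your target equality is equivalent to the product identity you set out to prove — the argument is circular — and your final hope that the crossing relations \eqref{eq:CrossRels} leave ``no leftover quadratic angular momenta'' cannot be realized: those relations rewrite crossed quadratics but do not annihilate these degree-$4$ terms. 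The repair is precisely to pass to the commutator, as the paper does: in $[O_{ij},O_{jqr}]$ the offending $L$-quadratic terms cancel between the two orders because the Weyl part of $O_{ij}$ commutes with all Clifford elements, collapsing everything to the single Clifford computation above and dispatching your $m=0$ and $m=2$ cases at the same time.
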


\begin{proof}
    Using \eqref{eq:Palgtrick}, note that
    \( [O_{ij},O_{pqr}] = -\tfrac{1}{2}\ad_P([O_{ij},e_{p}e_{q}e_{r}]) = -\tfrac{1}{4}\ad_P([e_ie_j,e_{p}e_{q}e_{r}])\). Assuming \(i,j,p,q,r\) distinct, it is then straightforward to compute
    \[[e_ie_j,e_{p}e_{q}e_{r}] = 0 = [e_ie_j,e_{i}e_{j}e_{r}] \quad \textup{and} \quad [e_ie_j,e_{j}e_{q}e_{r}] = 2e_{i}e_{q}e_{r},\]
    finishing the proof.
\end{proof}

Finally, since the 2-index symmetries correspond to standard generators of the diagonal embedding of \(\mathfrak{so}(n,\bbc)\), the operators  \(\{O_{ij} \mid 1 \leq i < j \leq n\}\) spans a subspace of \(\Cw\Cc\) isomorphic to the Lie algebra \(\mathfrak{so}(n, \mathbb{C})\) which correspond to the infinitesimal action of \(\mathrm{Pin}(n)\) on the polynomial-spinor space. Hence, it gives rise to a surjective homomorphism from \(\Cu(\mathfrak{so}(n, \mathbb{C}))\) onto a subalgebra of \(\tama\). In fact, we can be more precise about the image of this homomorphism.

\begin{proposition}\label{prop:SoImageIsEven}
    The image of the natural homomorphism \(\Cu(\mathfrak{so}(n, \mathbb{C}))\to\tama\) coincides with \(\tama_{\overline{0}}=\tama \cap \Cw\Cc_{\overline{0}}\), the even part of the TAMA.
\end{proposition}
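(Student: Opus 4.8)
The plan is to compare the image $\SB$ of the natural homomorphism $\Cu(\mathfrak{so}(n,\bbc))\to\tama$ (equivalently, the subalgebra of $\tama$ generated by the $2$-index symmetries $O_{ij}$) with the even part $\tama_{\overline{0}}$, proving the two inclusions separately. One direction is immediate: by \eqref{eq:2IndexSymmetries} each generator $O_{ij}=L_{ij}+\tfrac12 e_ie_j$ lies in $\Cw\Cc_{\overline{0}}$, so any product of $2$-index symmetries is even, and hence $\SB\subseteq\tama\cap\Cw\Cc_{\overline{0}}=\tama_{\overline{0}}$.

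For the reverse inclusion I would avoid trying to rewrite arbitrary products of the odd ($3$-index) symmetries directly, and instead exploit the finer output of the proof of Theorem~\ref{thm:TamaGeneration}: there it is shown that every element of $\tama$ is a linear combination of products $M_O\,O_A$, where $M_O$ is a product of $2$-index symmetries and $O_A$ is a \emph{single} $k$-index symmetry as in \eqref{eq:kIndexSymmetries}. Since $M_O$ is even while $O_A$ has Clifford parity equal to that of $k=|A|$, such a product lies in $\Cw\Cc_{\overline{0}}$ precisely when $|A|$ is even. Therefore $\tama_{\overline{0}}$ is spanned by the products $M_O\,O_A$ with $|A|$ even (the case $|A|=0$ contributing just $M_O\in\SB$).

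It then suffices to prove that every even $k$-index symmetry lies in $\SB$, which I would establish by induction on the even integer $k$. For $k=2$ this is the definition of $\SB$. For even $k\ge 4$, relation \eqref{eq:higherksymm} expresses $O_A$, up to signs, as a linear combination of products $O_{a_pa_q}\,O_{a_1\cdots\hat a_p\cdots\hat a_q\cdots a_k}$ of a $2$-index symmetry with a $(k-2)$-index symmetry; as $k-2$ is again even, the inductive hypothesis places each $(k-2)$-index factor in $\SB$, whence $O_A\in\SB$. The decisive point, and the thing one must get right, is the parity bookkeeping: the recursion \eqref{eq:higherksymm} lowers the index count by $2$ at each step, so starting from an even $k$ it never produces an odd ($3$-index) symmetry and terminates at $2$-index symmetries, which is exactly why even symmetries remain inside the $2$-index-generated subalgebra while the odd ones account for the odd part. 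Consequently $M_O\,O_A\in\SB\cdot\SB=\SB$ for every even $|A|$, giving $\tama_{\overline{0}}\subseteq\SB$ and hence equality. The main obstacle here is conceptual rather than computational: recognizing that the $M_O\,O_A$ decomposition together with this parity argument circumvents the genuinely awkward question of how two $3$-index symmetries multiply, for which no direct relation is available.
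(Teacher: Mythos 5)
Your proof is correct, but it is organized differently from the paper's. The paper's argument takes the statement of Theorem~\ref{thm:TamaGeneration} at face value (generation by $2$- and $3$-index symmetries), invokes Proposition~\ref{prop:2and3commutation} to reduce to pairs of $3$-index symmetries, and then performs an explicit $\ad_P$ computation showing that $O_{ijk}O_{pqr}$ expands into even $k$-index symmetries, which land in the image by \eqref{eq:higherksymm}. You instead reach back to the intermediate step inside the proof of Theorem~\ref{thm:TamaGeneration} --- that $\tama$ is spanned by the homogeneous elements $M_O\,O_A$ --- and observe that, since each such element has Clifford parity $|A|\bmod 2$ and the $\bbz_2$-decomposition of $\Cw\Cc$ is direct, the even part is spanned by those with $|A|$ even; the induction on even $k$ via \eqref{eq:higherksymm} then finishes the job. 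Both proofs ultimately rest on the same two ingredients (the $M_O\,O_A$ spanning set and the recursion \eqref{eq:higherksymm}), but your version buys a cleaner reduction: it never needs Proposition~\ref{prop:2and3commutation} nor the explicit expansion of a product of two $3$-index symmetries, at the mild cost of citing a fact established only inside the proof of Theorem~\ref{thm:TamaGeneration} rather than in its statement. The one step you pass over quickly --- that $O_A$ is homogeneous of parity $|A|\bmod 2$ --- is immediate from \eqref{eq:kIndexSymmetries} together with Lemma~\ref{lem:CliffIds}, so there is no gap.
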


\begin{proof}
    Let \(\Cu\) denote the image of the homomorphism \(\Cu(\mathfrak{so}(n, \mathbb{C}))\to\tama\). Since \(\Cu\) is generated by the 2-index symmetries, certainly \(\Cu \subseteq \tama_{\overline{0}}\). To prove the reverse inclusion, by Theorem \ref{thm:TamaGeneration} and Proposition \ref{prop:2and3commutation}, we need to guarantee that the product of any 3-index symmetries necessarily lands in \(\Cu\). We compute
    \begin{align*}
        O_{ijk}O_{pqr} &= -\frac{1}{2}\ad_P(O_{ijk}e_pe_qe_r)\\ 
        &= -\frac{1}{2}\ad_P((e_ie_je_k + L_{ij}e_k- L_{ik}e_j+ L_{jk}e_i)e_pe_qe_r)\\
        &= -\frac{1}{2}\ad_P\left(\left(\frac{1}{2}e_ie_je_k + O_{ij}e_k- O_{ik}e_j+ O_{jk}e_i\right)e_pe_qe_r\right)
    \end{align*} 
    which is a linear combination of \(k\)-index symmetries with \(k\) even and hence in \(\Cu\), by \eqref{eq:higherksymm}.
\end{proof}

The kernel of the surjective homomorphism \(\Cu(\mathfrak{so}(n, \mathbb{C}))\to\tama_{\bar 0}\) is the central object of study in this paper.

\subsection{Tableau relations}\label{s:TableauxRels}

As we saw in remark \ref{rem:CrossingRels}, in the AMA, the crossing relations \eqref{eq:CrossRels} are expressed, in the level of the associated graded algebra of the Weyl algebra, as
\[
L_{ij}L_{kl} + L_{ik}L_{lj} + L_{il}L_{jk} \equiv 0.
\]
 A more algebraic way to understand this relation is as follows. The natural filtration $\Cf^p(\Cw)$ of $\Cw$ induces a filtration on the AMA via $\ama^{(p)} = \Cf^{2p}(\Cw) \cap \ama$. Note that this coincides with defining $\ama^{(p)}$ as the span of all monomials of the type $\{L_D\mid D\in \mathscr{D}_p\}$ in light of Definition \ref{def:Diagrams} and \eqref{eq:DiagramMonAMA}.
Then, the symmetric group $S_4$ acts naturally on \(\ama^{(2)}\setminus \ama^{(1)}\) via permuting the four indices of any given monomial in that space. Now consider the standard Young tableau of shape $(1,1,1,1)$ and let $c_{(1,1,1,1)} \in \bbc S_4$ be the corresponding Young symmetriser. Given any sequence $(i,j,k,l)$ of four indices (not necessarily all entries distinct), note that
\[
c_{(1,1,1,1)}(L_{ij}L_{kl}) \equiv 8( L_{ij}L_{kl} + L_{ik}L_{lj}+L_{il}L_{jk})
\]
modulo terms in \(\ama^{(1)}\). Further, the stabiliser in \(S_4\) of the line \(\bbc(L_{ij}L_{kl}) \subset \ama^{(2)}\) is the subgroup \(H = \lpi (12),(34),(13)(24) \rpi\) and we can choose as a transversal set of the  coset space \(S_4/H\) the subgroup \(\Ct_2 := \lpi(234)\rpi\) so that the crossing-relation takes the form 
\begin{equation}\label{eq:AMASymmtriser}
\frac{1}{8}c_{(1,1,1,1)}(L_{ij}L_{kl}) = \sum_{g\in \Ct_2}g(L_{ij}L_{kl}) \equiv 0 
\end{equation}
modulo $\ama^{(1)}$.

In the TAMA, we can proceed in a similar fashion. But things are slightly more complicated since the ideal generating the kernel is not quadratic.

\begin{proposition}
    In the TAMA, the kernel of the natural homomorphism $\Cu(\mathfrak{so}(n,\bbc))\to \tama$ has no elements in filtration degree $3$ of $\Cu(\mathfrak{so}(n,\bbc))$
\end{proposition}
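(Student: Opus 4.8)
The plan is to pass to associated graded algebras and reduce the statement to an injectivity claim. Give $\Cu(\fso(n,\bbc))$ its PBW filtration (each generator $X_{ij}$ in degree one) and give $\Cw\Cc$ the filtration by total degree in the generators $x_i,y_i,e_i$. Since $L_{ij}$ has degree two and $e_ie_j$ has degree two, the generator $O_{ij}=L_{ij}+\tfrac12 e_ie_j$ of \eqref{eq:2IndexSymmetries} is homogeneous of filtration degree two, with symbol $\ell_{ij}+\tfrac12 e_ie_j$ in $\mathsf{gr}^2(\Cw\Cc)=\Cs^\bullet(\Cv)\otimes\bigwedge^\bullet(V)$, where $\ell_{ij}$ denotes the class of $L_{ij}$ in $\Cs^2(\Cv)$. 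Thus the homomorphism of the proposition induces a homomorphism of graded algebras
\[
\Phi=\mathsf{gr}\,\phi\colon \Cs^\bullet(\fso(n,\bbc))\longrightarrow \Cs^\bullet(\Cv)\otimes\bigwedge^\bullet(V),\qquad X_{ij}\mapsto \ell_{ij}+\tfrac12 e_ie_j .
\]
Any kernel element of filtration degree $3$ has a nonzero symbol in $\Cs^3(\fso(n,\bbc))$ lying in $\ker\Phi$, so it suffices to prove that $\Phi$ is injective in degree $3$.

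Next I would exploit the decomposition of $\Phi$ by Clifford degree, $\Phi=\sum_{k}\Phi^{(k)}$ with $\Phi^{(k)}\colon \Cs^p\to \Cs^{2(p-k)}(\Cv)\otimes\bigwedge^{2k}(V)$, obtained by selecting the factor $\tfrac12 e_ae_b$ from exactly $k$ of the $p$ variables. The degree-zero part $\Phi^{(0)}$ is precisely the graded map $X_{ij}\mapsto \ell_{ij}$ attached to the AMA, whose image is the subring generated by the $2\times2$ minors $\ell_{ij}$ of the generic $2\times n$ matrix with rows $(x_1,\dots,x_n)$ and $(y_1,\dots,y_n)$; its kernel is therefore the Plücker ideal $J$, which is classically generated in degree two by the quadrics $P_{ijkl}=X_{ij}X_{kl}-X_{ik}X_{jl}+X_{il}X_{jk}$ (for $i<j<k<l$). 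Since $\Phi^{(0)}(u)=0$ for every $u\in\ker\Phi$, we get $\ker\Phi\subseteq J$, and in degree three $(\ker\Phi)_3\subseteq J_3=\Cs^1(\fso(n,\bbc))\cdot J_2$.

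The crux is then to show that the higher Clifford components kill $J_3$. The key computation is the \emph{Clifford anomaly}
\[
\Phi^{(2)}(P_{ijkl})=\tfrac14\bigl(e_ie_je_ke_l-e_ie_ke_je_l+e_ie_le_je_k\bigr)=\tfrac34\,e_ie_je_ke_l\neq 0 ,
\]
which already proves injectivity in degree two, since distinct quadruples give linearly independent basis vectors of $\bigwedge^4(V)$. In degree three I would make this systematic through a leading-term argument in the spirit of Proposition~\ref{prop:Triangular}: put a term order on the monomial basis $x^\alpha y^\beta e^\gamma$ of $\Cs^\bullet(\Cv)\otimes\bigwedge^\bullet(V)$ that orders first by Clifford degree and breaks ties by the lexicographic (non-crossing) order on the polynomial part, and show that as $T$ ranges over multisets of three chords the images $\Phi(X_T)$ have pairwise distinct leading monomials. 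For a triple whose three chords are pairwise disjoint the separation happens in $\Phi^{(2)}$, where each term $\ell_g\,e_{S\setminus g}$ simultaneously records the unmatched chord $g$ (through the polynomial factor) and the matched $4$-set (through $e_{S\setminus g}$); the remaining types (two chords sharing a vertex, triangles, stars, repeated chords) are handled by their own highest surviving Clifford degree, equal to twice the maximum matching of the underlying multigraph.

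The step I expect to be the main obstacle is precisely this degree-three separation. Neither the polynomial part alone (it is degenerate, satisfying the Plücker relations) nor the top Clifford part alone (it only records the underlying vertex set, not how it is matched) suffices, so one is forced to combine them, and the nondegeneracy that must be verified amounts to the invertibility of a signed matching--incidence (Pfaffian-type) matrix on each relevant $4$- and $6$-element subset. An appealing alternative, which may circumvent the combinatorics, is to use that $\Phi$ is $O(n,\bbc)$-equivariant for the natural action in which $\ell_{ij}$, $e_ie_j$ and $X_{ij}$ all transform as $\bigwedge^2\bbc^n$; one then decomposes $\Cs^3(\bigwedge^2\bbc^n)$ into $O(n,\bbc)$-irreducibles and verifies that $\Phi$ is nonzero on each constituent, which forces injectivity on the multiplicity-free part and reduces the remaining multiplicities to a small explicit check.
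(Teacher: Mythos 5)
Your reduction to the associated graded map $\Phi$ and the decomposition by Clifford degree is sound, and your degree-two argument is complete and correct: from $\Phi^{(0)}(u)=0$ you correctly place $u$ in the span of the Pl\"ucker quadrics $P_{ijkl}$, and the computation $\Phi^{(2)}(P_{ijkl})=\tfrac34\,e_ie_je_ke_l$, together with the linear independence of these elements of $\bigwedge^4(V)$ over distinct quadruples, forces $u=0$. This is a genuinely different route from the paper's, which instead restricts the nondegenerate pairing $\Bp=\kappa\otimes\delta$ of Section~\ref{sec:WeylCliff} to the span of the monomials $\{O_{ij}O_{kl},O_{ik}O_{jl},O_{il}O_{jk}\}$ attached to each $4$-tuple (respectively the $15$ matching monomials of each $6$-tuple in degree three) and checks that the resulting Gram matrices have full rank; the $\mathbb{Z}^n$-multigrading preserved by the homomorphism is what lets the paper treat each tuple of indices separately. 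In degree two your argument is arguably cleaner, since it avoids the case analysis over coincidences among indices.

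However, the degree-three case --- the actual content of the proposition --- is not proved in your proposal. You correctly reduce it to showing that $\Phi$ does not vanish on any nonzero element of $J_3=\Cs^1(\fso(n))\cdot J_2$, and you then offer two strategies (a leading-term argument mixing the Clifford and polynomial gradings, and an $O(n,\bbc)$-equivariant decomposition of $\Cs^3(\bigwedge^2\bbc^n)$), but neither is carried out, and you yourself flag the required nondegeneracy as ``the main obstacle''. This is a genuine gap rather than a routine verification: the top Clifford component of $\Phi(X_{ab}P_{ijkl})$ is $\tfrac38\,e_ae_be_ie_je_ke_l$, which vanishes whenever $\{a,b\}$ meets $\{i,j,k,l\}$ and otherwise records only the $6$-element vertex set and not the matching, so the separation must come from the mixed components $\Phi^{(1)}$ and $\Phi^{(2)}$, where the syzygies among the Pl\"ucker relations make the asserted ``pairwise distinct leading monomials'' claim nontrivial (and, for the representation-theoretic variant, the non-multiplicity-free constituents are exactly the ones left unresolved). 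To be fair, the paper's own proof is also only a computational sketch, asserting that a direct computation produces full-rank Gram matrices; but it at least reduces the statement to a finite, per-$6$-tuple check, whereas your proposal stops one step short of an analogous verifiable criterion.
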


\begin{proof}

    Our proof is computational, and we only sketch the arguments. The key idea is to analyze the restriction to $\tama$ of the bilinear pairing $\beta$ from Section~\ref{sec:WeylCliff}, in which it suffices to compute things in the associated graded algebra.

    For each 4-tuple of indices \( i,j,k,l \) (not necessarily distinct), consider the set of quadratic elements
    \[
    \{O_{ij}O_{kl},\; O_{ik}O_{jl},\; O_{il}O_{jk}\}.
    \]
    Depending on equalities among indices, this set may contain fewer than three distinct elements, but we consider the span of the nonzero ones. If we examine the symmetric matrix \( B \) formed by pairing these elements via \( \beta \), a direct computation  shows that, in each case, the matrix \( B \) has full rank. Since this holds for all such 4-tuples, it follows that there are no (graded) degree 2 elements in the kernel of the homomorphism.

    A similar reasoning using the 15 (or, possibly less) monomials of (graded) degree 3 from any choice of a $6$-tuple of indices $i,j,k,l,m,n$ (not necessarily distinct) again only produces full rank matrices of pairings. 
\end{proof}

As a consequence of the previous proposition, we need to search for elements in the kernel that involve monomials on $\tama$ with at least $4$ generators. We remark that from \eqref{eq:higherksymm} the $4$-index symmetries satisfy, for any $4$-tuple $(i,j,k,l)$, 
\begin{equation}\label{eq:4-index}
    O_{ijkl} = O_{ij}O_{kl} + O_{ik}O_{lj} + O_{il}O_{jk}.
\end{equation}

The indices on the right-hand side of \eqref{eq:4-index} are exactly as in the crossing-relation for the AMA. We then consider products of $4$-index symmetries. The group $S_8$ naturally acts on the indices of a product $O_{ijkl}O_{pqrs}$ and we consider now the Young tableau of shape $(2,2,2,2)$ where we number the boxes from $1$ to $4$ in the first column and from $5$ to $8$ in the second column so that each column represents a $4$-index symmetry in a product. We then apply the Young symmetriser $c_{(2,2,2,2)}$ which is the product of an antisymmetriser on the $S_4\times S_4$ preserving the columns, followed by a symmetriser on the group 
\begin{equation} \label{eq:defofcT}
\Ct := \lpi (15),(26),(37),(48)\rpi \cong S_2^4
\end{equation}
that preserves the rows. We will now show that the Young symmetriser $c_{(2,2,2,2)}$, when applied to products of $4$-index symmetries, will produce non-trivial relations in AMA. Namely, we study the right hand side of the equation
\[ 
\frac{1}{(24!)^2}c_{(2,2,2,2)}(O_AO_B) = \sum_{g\in\Ct}\sigma(O_AO_B).
\]

\begin{lemma}\label{lem:CliffIds}
    Let $A = (a_1,a_2,\ldots,a_k)$ be an ordered sequence of distinct integers with an ordered pairs $\{ a_p,a_q\}$ such that $p <q\}$. Then,
    \begin{align*}
        e_{a_q}e_{a_p}e_{A\setminus\{a_p,a_q\}} &= (-1)^{p+q}e_A \\
        e_{A\setminus\{a_p,a_q\}}e_A &= (-1)^{k(k-1)/2}(-1)^{p+q}e_{a_p}e_{a_q}\\
        e_{a_p}e_{a_q}e_A &= (-1)^{p+q}e_{A\setminus\{a_p,a_q\}}
    \end{align*}
\end{lemma}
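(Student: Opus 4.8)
The plan is to reduce all three identities to two elementary facts coming from the Clifford relation \eqref{eq:CliffRels}: under our normalization $\sigma(y_i,y_j)=\delta_{ij}$ the generators satisfy $e_i^2 = 1$, and, because the integers in $A$ are distinct, the relevant generators pairwise anticommute, $e_i e_j = -e_j e_i$ for $i\neq j$. Everything else is sign bookkeeping, so there is no conceptual difficulty; the only care needed is in the transposition counts.

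First I would prove the first identity directly by extracting the two distinguished generators from $e_A = e_{a_1}\cdots e_{a_k}$ and moving them to the front. Moving $e_{a_q}$ to the front past $e_{a_{q-1}},\ldots,e_{a_1}$ costs $q-1$ transpositions, and then moving $e_{a_p}$ into second position within the remaining ordered block (in which it sits at position $p$) costs a further $p-1$ transpositions. Since each transposition of distinct generators contributes a factor $-1$, this yields $e_A = (-1)^{(q-1)+(p-1)} e_{a_q} e_{a_p} e_{A\setminus\{a_p,a_q\}} = (-1)^{p+q} e_{a_q} e_{a_p} e_{A\setminus\{a_p,a_q\}}$. As $(-1)^{p+q}$ is its own inverse, this is exactly the first identity.

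Next I would deduce the third identity from the first. Multiplying the rearranged first identity $e_A = (-1)^{p+q} e_{a_q} e_{a_p} e_{A\setminus\{a_p,a_q\}}$ on the left by $e_{a_p} e_{a_q}$, the block $e_{a_p} e_{a_q} e_{a_q} e_{a_p}$ collapses to $1$ via $e_{a_q}^2 = e_{a_p}^2 = 1$, giving $e_{a_p}e_{a_q} e_A = (-1)^{p+q} e_{A\setminus\{a_p,a_q\}}$.

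Finally, for the second identity I would first record that $e_A^2 = (-1)^{k(k-1)/2}$: the reversed word $e_{a_k}\cdots e_{a_1}$ equals $(-1)^{k(k-1)/2} e_A$, since reversing $k$ pairwise-anticommuting factors costs $\binom{k}{2}$ transpositions, and simultaneously it is a two-sided inverse of $e_A$ because consecutive squares cancel from the middle outward, so $(-1)^{k(k-1)/2} e_A^2 = 1$. Solving the third identity for $e_{A\setminus\{a_p,a_q\}}$ and right-multiplying by $e_A$ then gives $e_{A\setminus\{a_p,a_q\}} e_A = (-1)^{p+q} e_{a_p} e_{a_q} e_A^2 = (-1)^{k(k-1)/2}(-1)^{p+q} e_{a_p} e_{a_q}$, as claimed. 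The main (and only) obstacle is keeping the transposition counts straight; in particular, moving $e_{a_q}$ before $e_{a_p}$ is precisely the choice that produces the clean exponent $p+q$ rather than a spurious extra sign.
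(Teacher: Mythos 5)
Your proof is correct and follows essentially the same route as the paper: the first identity is settled by the same transposition count ($q-1$ swaps for $e_{a_q}$, then $p-1$ for $e_{a_p}$), and the other two are obtained by multiplying by suitable Clifford elements and using $e_i^2=1$ together with $e_A^2=(-1)^{k(k-1)/2}$. The only cosmetic differences are that you derive the third identity before the second and that you explicitly justify $e_A^2=(-1)^{k(k-1)/2}$, which the paper uses without comment.
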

\begin{proof}
    For the first equation, note that $e_{a_p}e_{A\setminus\{a_p,a_p\}} = (-1)^{p-1} e_{A\setminus\{a_q\}}$, since we need to swap $p-1$ indices to arrange the sequence with $a_p$ omitted. Likewise, we swap $q-1$ indices to rearrange $e_{a_q}e_{A\setminus\{a_q\}}$, which settles this equation.
    For the second, we have $e_{A\setminus\{a_p,a_p\}}e_A = \alpha e_{a_p}e_{a_q}$. By left multiplication, the constant $\alpha$ satisfies
    \[
    \alpha = e_{a_q}e_{a_p}e_{A\setminus\{a_p,a_p\}}e_A = (-1)^{p+q}e_A^2 = (-1)^{k(k-1)/2}(-1)^{p+q},
    \]
    where the first equation was used. For the last equation, we know that $e_{a_p}e_{a_q}e_A = \alpha e_{A\setminus\{a_p,a_p\}}$, from which, using again the first equation, we get
    \[ 
    e_A^2 = \alpha(e_Ae_{a_q}e_{a_p}e_{A\setminus\{a_p,a_p\}}) = \alpha(-1)^{p+q}e_A^2
    \]
    which implies $\alpha=(-1)^{p+q}$.
\end{proof}

\begin{theorem}\label{t:relsinideal}
 Let $A = (a_1,a_2,a_3,a_4)$ and $B = (b_1,b_2,b_3,b_4)$ be sequences of four distinct numbers. Then,
 \[ \sum_{ \sigma \in \mathcal{T}} \sigma (O_AO_B) = 12 \delta_{A,B}-  \sum_{g \in \mathcal{T}} \sigma \left( \sum_{(a,b) \subset  A \cap B} O_{A \setminus ( a,b)} O_{B \setminus ( a,b)} \right).\] 
 \end{theorem}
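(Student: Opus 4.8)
The plan is to work entirely inside the explicit realisation in $\Cw\Cc$ and to expand each $4$-index symmetry via \eqref{eq:kIndexSymmetries}. Using the third identity of Lemma~\ref{lem:CliffIds} to reduce $e_{a_p}e_{a_q}e_A$, I write $O_A = \tfrac{3}{2}e_A - \sum_{p<q}(-1)^{p+q}L_{a_pa_q}\,e_{A\setminus\{a_p,a_q\}}$ and likewise for $O_B$. Multiplying, the product $O_AO_B$ separates, according to the number of angular-momentum factors, into a Weyl-degree-$4$ part $\sum_{p<q}\sum_{r<s}(-1)^{p+q+r+s}L_{a_pa_q}L_{b_rb_s}\,e_{A\setminus\{a_p,a_q\}}e_{B\setminus\{b_r,b_s\}}$, two Weyl-degree-$2$ parts, and the Weyl-degree-$0$ part $\tfrac{9}{4}e_Ae_B$. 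The decisive structural point is that the Weyl factors commute with the Clifford factors, so the two tensor slots can be analysed separately: the Clifford slot is governed entirely by Lemma~\ref{lem:CliffIds}, and the angular-momentum slot by the AMA crossing relation \eqref{eq:CrossRels}. Because the three Weyl-degrees cannot cancel against one another, each degree must match on the two sides independently.

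I would first dispose of the generic disjoint case $A\cap B=\emptyset$, where the assertion reduces to $\sum_{\sigma\in\Ct}\sigma(O_AO_B)=0$; this is the exact TAMA-analogue of the crossing relation. Here no Clifford contraction occurs. Since $\Ct\cong S_2^4$ swaps each pair $a_i\leftrightarrow b_i$ by the disjoint transposition $(i,\,4{+}i)$, a subset $S\subseteq\{1,2,3,4\}$ of swaps reorders the eight Clifford generators with sign exactly $(-1)^{|S|}$. For the degree-$0$ part this gives $\tfrac{9}{4}\,e_{\mathrm{sorted}}\sum_{S}(-1)^{|S|}=\tfrac{9}{4}\,e_{\mathrm{sorted}}(1-1)^4=0$. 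The degree-$2$ and degree-$4$ parts vanish by the same alternation once the symmetrisation on the $L$-factors is recognised, via \eqref{eq:AMASymmtriser} and Remark~\ref{rem:CrossingRels}, as a superposition of crossing triples $L_{ij}L_{kl}+L_{ik}L_{lj}+L_{il}L_{jk}$, each of which is zero when all indices are distinct. The total antisymmetry of $O_A$ and $O_B$ is what makes the Clifford coefficients uniform across a given crossing triple, so that the crossing relation can be applied slotwise.

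Next I would treat the overlaps $A\cap B\neq\emptyset$. The expansion is unchanged, but now Clifford monomials $e_{A\setminus\{a_p,a_q\}}$ and $e_{B\setminus\{b_r,b_s\}}$ sharing an index contract through the second identity of Lemma~\ref{lem:CliffIds}, lowering the Clifford degree by two per contracted index. The crucial observation is that contracting a shared pair $(a,b)\subset A\cap B$ leaves precisely the Clifford part of a product $O_{A\setminus(a,b)}O_{B\setminus(a,b)}$; re-assembling the surviving $L$-bilinears by reading \eqref{eq:4-index} backwards converts the contracted degree-$4$ and degree-$2$ contributions into $-\sum_{\sigma\in\Ct}\sigma\bigl(\sum_{(a,b)\subset A\cap B}O_{A\setminus(a,b)}O_{B\setminus(a,b)}\bigr)$. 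The fully contracted scalar terms require $A$ and $B$ to coincide index-for-index, producing $12\,\delta_{A,B}$; when $A=B$ this is equivalent to the Casimir-type identity $O_A^2+\sum_{1\le r<s\le 4}O_{a_ra_s}^2=\tfrac{3}{4}$, whose constant part one checks immediately from $e_A^2=1$ and $e_ie_je_ie_j=-1$.

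The hardest part will be the sign and multiplicity bookkeeping in this second step: propagating the Clifford signs of Lemma~\ref{lem:CliffIds} through the sixteen-fold $\Ct$-symmetrisation and confirming that the contracted fragments recombine into lower symmetries with exactly the coefficients forced by \eqref{eq:4-index} and by the normalisation of \eqref{eq:kIndexSymmetries}. A systematic way to discharge this, rather than matching monomials by hand, is to test the asserted equality against the nondegenerate pairing $\beta=\kappa\otimes\delta$ of Section~\ref{sec:WeylCliff}: since the monomial bases are orthogonal for $\kappa$ and for $\delta$, pairing both sides with a basis element $x^\alpha y^\beta\otimes e^\gamma$ factorises into a permanent computation (Weyl) times a determinant computation (Clifford), and the crossing relation guarantees agreement term by term. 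I would use this pairing formulation as the rigorous backbone, with the slotwise crossing-relation argument above serving as the conceptual guide.
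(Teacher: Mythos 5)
Your overall strategy---expand the $4$-index symmetries via \eqref{eq:kIndexSymmetries}, let the $\mathcal{T}$-symmetrisation annihilate the totally antisymmetric Clifford pieces, and let the Clifford contractions produce the lower symmetries and the scalar---is in the right spirit, but as written it has two genuine gaps. First, the claim that ``the three Weyl-degrees cannot cancel against one another, so each degree must match independently'' is unjustified: the theorem is an exact identity in $\Cw\Cc$, not an identity in the associated graded algebra, and neither side is homogeneous in Weyl degree. Products $L_{a_pa_q}L_{b_rb_s}$ with shared indices produce lower-degree commutator terms; the crossing relation \eqref{eq:CrossRels} annihilates a triple only modulo the linear terms $X_{ij}\delta_{kl}+\cdots$, which do not vanish once $A\cap B\neq\emptyset$; and the right-hand side $O_{A\setminus(a,b)}O_{B\setminus(a,b)}=(L+\tfrac12 ee)(L+\tfrac12 ee)$ itself mixes Weyl degrees $4$, $2$ and $0$. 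Second, the two decisive combinatorial assertions---that after symmetrisation the degree-$4$ part organises into crossing triples with \emph{uniform} Clifford coefficients, and that the contracted fragments recombine, with exactly the coefficients forced by \eqref{eq:4-index}, into $-\sum_{\sigma\in\mathcal{T}}\sigma\bigl(\sum_{(a,b)\subset A\cap B}O_{A\setminus(a,b)}O_{B\setminus(a,b)}\bigr)$---are essentially the content of the theorem, and you concede that you can only discharge them by an unspecified ``term by term'' computation against the pairing $\beta$. That is a verification scheme, not a proof.

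The single idea that closes both gaps, and the one the paper's proof turns on, is to never expand $O_B$ at all. Since $O_B=-\tfrac12\ad_P(e_B)$ and $\ad_P(ab)=a\,\ad_P(b)$ for $a\in\tama$ by \eqref{eq:Palgtrick}, it suffices to establish the analogous identity for $\sum_{\sigma\in\mathcal{T}}\sigma(O_Ae_B)$ and then apply $-\tfrac12\ad_P$ once at the end. With only one factor expanded, every term has the form $-\tfrac32 e_Ae_B$ or $O_{A\setminus\{a_p,a_q\}}e_{a_pa_q}e_B$; the alternating-sign argument you correctly give for the degree-$0$ piece extends to every term whose Clifford part has degree at least $4$ (there is always a free position $i$ for which the swap $a_i\leftrightarrow b_i$ fixes the $O$-prefactor and flips the sign of the Clifford monomial), and the surviving contractions $(a_p,a_q)\subset A\cap B$ are read off directly from Lemma~\ref{lem:CliffIds}. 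No crossing relation, no degree separation, and no pairing computation is then needed. I recommend restructuring your argument around this projector trick rather than the double expansion.
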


 \begin{proof}
From \eqref{eq:kIndexSymmetries} and using $L_{ij} = O_{ij} - \frac{1}{2}e_ie_j$, we obtain
\begin{align*}
O_Ae_B &= -\frac{3}{2}e_A -\sum_{p<q} (-1)^{p+q}O_{a_pa_q}e_{A\setminus\{a_p,a_q\}}\\
&= -\frac{3}{2}e_A -\sum_{p<q} (-1)^{p+q}O_{A\setminus\{a_p,a_q\}}e_{a_pa_q},
\end{align*}
where we used in the last equation that the sum of any two indices has the same parity as the sum of their complement in a sequence with $4$ terms. Hence, 
\begin{equation}\label{eq:Geneightterms}
O_Ae_B = -\frac{3}{2}e_Ae_B -\sum_{p<q} (-1)^{p+q}O_{A\setminus\{a_p,a_q\}}e_{a_pa_q}e_B.
\end{equation}

 Note that the first term in the right side of (\ref{eq:Geneightterms}) will be antisymmetric in the terms $A \cup B \setminus A \cap B$, thus if $A \neq B$ then this term will go to zero under symmetrisation by $\mathcal{T}$. If $A = B$ then this term is $-\frac{3}{2}e_{A}^2 = -\frac{3}{2}$. Focusing on the Clifford terms of the form $e_{a_pa_q}e_B$ with $(a_p,a_q) \subset A$, if $(a_p,a_q)$ is not a subsequence of $B$ then this term will be an antisymmetric Clifford element of degree $4$ or higher. Thus, it will be antisymmetric in at least one of the elements of $\mathcal{T}$. Hence symmetrising by $\mathcal{T}$ will annhilate such elements. The only contributing elements on the right hand side of (\ref{eq:Geneightterms}) are monomials $O_{A\setminus\{a_p,a_q\}}e_{a_pa_q}e_B$ such that $(a_p,a_q) \subset B$. Using Lemma \ref{lem:CliffIds} and (\ref{eq:Geneightterms}) we obtain
 \[
    \sum_{\sigma \in \mathcal{T}}\sigma(O_Ae_B)=-24\delta_{A,B}-  \sum_{\sigma \in \mathcal{T}} \sum_{(a_p,a_q) \subset  A \cap B} \sigma (O_{A \setminus ( a_p,a_q)} e_{B \setminus ( a_p,a_q)}) .
\]
Applying the extremal projector $-\frac{1}{2}\ad_P$ finalizes the prove of the theorem. 
 \end{proof}

 \begin{example}\label{eg:rel4and5}
 When $A=\{a,b,c,d\} =\{ 1,2,3,4\} = \{w,x,y,z\} = B$, every element $\sigma \in \mathcal{T}$ is such that $\sigma(O_A O_B) = O_A O_B$.  Also, the element $ \sum_{(a,b) \in \{1,2,3,4\}} O_{A \setminus (a,b)}^2$ is stable under every $\sigma \in \mathcal{T}$. Thus the equation 
    \[
    \sum_{\sigma \in \mathcal{T}} \sigma(O_AO_B) =12 \delta_{A,B} -\sum_{\sigma \in \mathcal{T}} (O_{12}^2 + O_{13}^2 + O_{14}^2 + O_{23}^2 + O_{24}^2 + O_{34}^2) \] becomes, once divided by $16 = |\mathcal{T}|$,
    \[ O_{1234}^2 = \frac{3}{4} - O_{12}^2 - O_{13}^2 - O_{14}^2 - O_{23}^2 - O_{24}^2 - O_{34}^2.\]
 When $A = (i,j,k,l)$ with $B = (i,j,k,s)$, after dividing by $8$, the equation of Theorem \ref{t:relsinideal}  yields the equation
\[O_AO_B  + O_B O_A=  -\sum_{a \in (i,j,k)} O_{al}O_{as} + O_{as}O_{al}.\]
\end{example}

\subsection{Main conjecture}
We can now state our main conjecture.
Consider $X_{ij}:= E_{ij} - E_{ji} \in U(\mathfrak{so}(n))$ and for $A= (a_1,a_2,a_3,a_4)$ a sequence of four distinct indices define 
\[ X_A = X_{a_1a_2} X_{a_3a_4} + X_{a_1a_3} X_{a_4a_2} + X_{a_1a_4} X_{a_2a_3}. \]
     
\begin{conjecture}\label{con:maincon}
    
       Let $I$ be the ideal in the exact sequence 
      \[0 \to I \to U(\fso(n)) \to \tama_{\overline{0}} \to 0.\]
      Then, $I$ is generated by elements of the form
 \begin{equation}\label{e:relsinideal} \sum_{ \sigma \in \mathcal{T}} \sigma \left( X_AX_B  +   \sum_{(a,b) \subset  A \cap B} X_{A \setminus ( a,b)} X_{B \setminus ( a,b)} \right) - 12 \delta_{A,B},\end{equation}
 with $A = (a_1,a_2,a_3,a_4)$, $B = (b_1,b_2,b_3,b_4)$ sequences of four distinct numbers, and $\mathcal{T}$ defined as in (\ref{eq:defofcT}).
   In particular, there exists a presentation of the even subalgebra of the TAMA 
   \[ \tama_{\overline{0}} \cong  U(\mathfrak{so}(n)) \big/ \left\langle  \sum_{ \sigma \in \mathcal{T}} \sigma \left( X_AX_B  +   \sum_{(a,b) \subset  A \cap B} X_{A \setminus ( a,b)} X_{B \setminus ( a,b)} \right) - 12 \delta_{A,B} \right\rangle.
   \]
\end{conjecture}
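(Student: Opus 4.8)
The plan is to prove the two inclusions making up the equality $I = J$, where $J$ denotes the two-sided ideal of $U(\mathfrak{so}(n))$ generated by the elements in \eqref{e:relsinideal}. One inclusion is already available: under the surjection $U(\mathfrak{so}(n)) \to \tama_{\overline 0}$ of Proposition~\ref{prop:SoImageIsEven}, which sends $X_{ij} \mapsto O_{ij}$ and hence $X_A \mapsto O_A$ by \eqref{eq:4-index}, Theorem~\ref{t:relsinideal} says precisely that each generator of $J$ is sent to $0$. Thus $J \subseteq I$, and there is an induced surjection
\[
\pi \colon U(\mathfrak{so}(n))/J \twoheadrightarrow U(\mathfrak{so}(n))/I \cong \tama_{\overline 0}.
\]
The entire content of the conjecture is that $\pi$ is injective, and since everything is filtered (by the filtration inherited from $\Cw\Cc$ as in Section~\ref{s:diagrbasisAMA}), it suffices to check in each filtration degree that $\dim\big(U(\mathfrak{so}(n))/J\big) \le \dim \tama_{\overline 0}$.

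To bound the left-hand dimension I would use the generators of $J$ as rewriting rules, in direct analogy with the passage from arbitrary diagrams to non-crossing diagrams in the AMA. The $A \ne B$ relations rewrite a symmetrised product $\sum_{\sigma \in \mathcal{T}} \sigma(O_A O_B)$ in terms of products supported on the strictly smaller index set $A \cap B$, while the $A = B$ relations (cf.\ Example~\ref{eg:rel4and5}) rewrite the square of a four-index symmetry as a sum of squares of two-index symmetries plus a scalar. These reductions are compatible with a monomial order on the associated diagrams, exactly as in Proposition~\ref{prop:Triangular}, and therefore terminate; the normal forms they leave behind are the \emph{uncrossable monomials} of Section~\ref{sec:lastsection}, which consequently span $U(\mathfrak{so}(n))/J$.

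The decisive step is to control the linear dependences among the uncrossable monomials. I would attach to each such monomial a diagram (Definitions~\ref{d:AMAEdiags} and~\ref{def:TAMAdiagram}) and study their images in $\tama_{\overline 0}$ through the nondegenerate pairing $\beta = \kappa \otimes \delta$ of Section~\ref{sec:WeylCliff}, forming in each degree the Gram matrix of these monomials. Its rank equals the dimension of their span in $\tama_{\overline 0}$, and each kernel vector records a genuine linear dependence among the images. The task is then to show that every such dependence is already a consequence of the ideal $J$, so that the same dependence holds in $U(\mathfrak{so}(n))/J$; this forces $\dim\big(U(\mathfrak{so}(n))/J\big)$ to drop to the Gram rank $= \dim \tama_{\overline 0}$ and makes $\pi$ an isomorphism.

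The hard part is exactly this matching of dependences with $J$-consequences for arbitrary $n$. Unlike the AMA, where the non-crossing diagrams are visibly in bijection with a monomial basis (Proposition~\ref{prop:Injective}), the uncrossable monomials here need \emph{not} be linearly independent: the Clifford factors generate extra relations, and controlling their behaviour under the symmetriser $\mathcal{T}$ appears to require genuinely new combinatorics as $n$ grows. For this reason I expect the Gram-matrix analysis — and hence the equality $I = J$ — to be tractable only in low rank, which is precisely what the diagrammatic computation of Section~\ref{sec:lastsection} accomplishes for $n = 4$ and $n = 5$, leaving the general statement as the conjecture.
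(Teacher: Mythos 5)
Your overall architecture --- the easy inclusion $J \subseteq I$ via Theorem~\ref{t:relsinideal}, followed by a spanning-plus-independence analysis of the induced surjection $\pi$ --- matches the paper's strategy, but you have inverted which half is hard, and the half you treat as routine is precisely where the argument is open for general $n$. Your claim that the uncrossable monomials ``need not be linearly independent'' is the opposite of what the paper establishes: Lemma~\ref{l:uncrossablearelinind} proves their linear independence in $\mathsf{gr}(\Cw\Cc)$ for \emph{all} $n$ by a short support argument (an uncrossable diagram $D = D' \cup (i,j)$ has in its AMA-exterior expansion the uncrossed diagram $\underline{D}'$ with vertices $i,j$ coloured black, and $D$ is the unique TAMA diagram having $\underline{D}'$ in its support). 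Consequently no Gram-matrix or pairing analysis is needed: once the uncrossable monomials span $U(\mathfrak{so}(n))/J$ and their images in $\tama_{\overline{0}}$ are independent, $\pi$ is immediately an isomorphism, so your third step both overcomplicates and mislocates the difficulty.

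The genuine gap is your assertion that the relations \eqref{e:relsinideal}, used as rewriting rules, ``terminate'' with the uncrossable monomials as normal forms ``exactly as in Proposition~\ref{prop:Triangular}.'' That is the open content of the conjecture. Unlike the AMA, where a single crossing relation uncrosses any pair of chords, the symmetrised relations here only eliminate specific configurations; even for $n=5$ the paper must extract three families of consequences (the double-crossing, `A', and uncapped-star relations), introduce the notion of an internal quadrilateral, and induct on the number of such quadrilaterals to reduce an arbitrary monomial to uncrossable ones --- and the coincidence of ``no internal quadrilateral'' with ``uncrossable'' is verified only for $n=4$ and $n=5$. No monomial order compatible with these reductions is exhibited for general $n$. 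So your proposal reproduces the skeleton correctly but leaves the decisive spanning step unproved, which is exactly why the statement remains a conjecture outside of ranks $4$ and $5$.
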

Note that Theorem \ref{t:relsinideal} shows that the relations (\ref{e:relsinideal}) are all contained in the ideal $I$, the conjecture is predominately concerned with the statement that $I$ is generated by these relations. In the section below we prove Conjecture \ref{con:maincon} when $n=4$ and $n=5$. The technique that we use to show that $I$ is generated by relations (\ref{e:relsinideal}) is the following. First by applying relations of  form \eqref{e:relsinideal} we find a spanning set for all monomials. We refer to the elements in this spanning set as uncrossable monomials. We then show that the set of uncrossable monomials is linearly independent in $\mathcal{WC}$ and therefore in \(\tama_{\overline{0}}\).

\section{Uncrossable diagrams and linear independence} \label{sec:lastsection}
In this section we prove Conjecture \ref{con:maincon} for $n=4,5$. Our strategy is as follows: we wish to show that the ideal $I$ is generated by relations (\ref{e:relsinideal}). We first (Section \ref{sec:uncrossable}) find a set of monomials, which we call uncrossable monomials and show that this set is linearly independent in $\mathcal{WC}$, in fact we work entirely in $\mathsf{gr}(\mathcal{WC})$, since linear independence in the associated graded implies linear dependence. The results in this section hold for all $n$. In Section \ref{sec:prooffor4and5}, specifying to $n=4$ and $5$, we apply relations of the  form \eqref{e:relsinideal} and show show that the set of uncrossable monomials is a spanning set for subalgebra of $\tama$ generated by $O_{ij}$. Thus proving that when $n \in \{ 4,5\}$ the ideal $I$ is generated by relations (\ref{e:relsinideal}).   Throughout this section we work modulo lower order terms, or more accurately in the associated graded algebras $\mathsf{gr} (\mathcal{W} \otimes \clif) \cong S(V \oplus V^*) \otimes \bigwedge (V)$. We can do this since proving linear independence can be done in the associated graded, and using induction filtered piece by filtered piece we can also prove spanning results in the associated graded. Since both the TAMA and the AMA-Clifford algebra inherit their filtration from the Weyl Clifford algebras there are linear maps $\mathsf{gr}(\tama) \to \mathsf{gr}(\ama \otimes  \clif) \to \mathsf{gr}(\mathcal{W}\clif)$. Throughout this section an element written in sans serif will be in the associated graded algebra, for example for $O_{ij} \in \tama$, then $\mathsf{O}_{ij} \in \mathsf{gr}(\tama)$, for $L_{ij} \in \ama$, then $\mathsf{L}_{ij} \in \mathsf{gr}(\ama)$ and for $e_{k} \in \Cc$, then $\mathsf{e}_{k} \in \mathsf{gr}(\Cc)$.

\subsection{Uncrossable monomials and linear independence} \label{sec:uncrossable}

\begin{definition}[TAMA Diagram] \label{def:TAMAdiagram} We define a diagram $D$ associated to a monomial of the form $\prod \mathsf{O}_{ij}^{m_{ij}}$ to be the graph on $n$ vertices with chord $(i,j)$ labeled by $m_{ij}$.  
\end{definition}

Recall that a diagram was crossed if there existed $i < j < k <l$ such that $m_{ik} >1$ and $m_{jl} >1$ and uncrossed if this never happens. We say a diagram $D$ is uncrossable if either $D$ is uncrossed or $D = D' \cup (i,j)$ where $D'$ is uncrossed. That is, there is a single chord that one can remove from $D$ to form an uncrossed diagram. We say that a monomial is uncrossable if its associated diagram is uncrossable.

\begin{figure}
    \centering
 \begin{tikzpicture}[
    baseline=(current bounding box.center),
    scale=1.2,
    every node/.style={circle, fill=black, inner sep=1pt},
    midarrow/.style={
      blue,
      thick,
      postaction={decorate},
      decoration={
        markings,
        mark=at position 0.5 with {\arrow{>}}
      }
    }
  ]

    \foreach \i in {1,...,4} {
      \node (v\i) at ({360/4 * (\i - 1)}:1) {};
      \node[draw=none, fill=none] at ({360/4 * (\i - 1)}:1.2) {\(v_{\i}\)};
    }

    \draw[midarrow] (v1) -- (v2) node[draw=none,fill=none,midway,above right] {$w$};
    \draw[midarrow] (v2) -- (v3) node[draw=none,fill=none,midway,above left] {$x$};
    \draw[midarrow] (v3) -- (v4) node[draw=none,fill=none,midway,below left] {$y$};
    \draw[midarrow] (v1) -- (v4)node[draw=none,fill=none,midway,below right] {$z$};
    \draw[midarrow40] (v1) -- (v3) node[draw=none,fill=none,near start,above right] {$a$};
    \draw[midarrow60] (v2) -- (v4)node[draw=none,fill=none,near start,below left] {$b$};
    \end{tikzpicture}
    \caption{The TAMA diagram associated to $\mathsf{O}_{12}^{w}\mathsf{O}_{23}^{x}\mathsf{O}_{34}^{y}\mathsf{O}_{14}^{z}\mathsf{O}_{13}^{a}\mathsf{O}_{24}^{b}$ }
    \label{fig:TAMAD}
\end{figure}

\begin{definition}[AMA-exterior Diagram]\label{d:AMAEdiags}
     We define a diagram $\underline{D}$ associated to a monomial of the form $\prod \mathsf{L}_{ij}^{n_{ij}}\mathsf{e}_k^{\gamma_k}$ to be the graph on $n$ vertices with chord $(i,j)$ labelled by $n_{ij}$ and vertex $k$ coloured black if $\gamma_k =1$ and white if $\gamma_k =0$.   
\end{definition}

\begin{figure}
    \centering
     \begin{tikzpicture}[
    baseline=(current bounding box.center),
    scale=1.2,
    every node/.style={circle, fill=black, inner sep=1pt},
    midarrow/.style={
      blue,
      thick,
      postaction={decorate},
      decoration={
        markings,
        mark=at position 0.5 with {\arrow{>}}
      }
    }
  ]

    \foreach \i in {1,...,4} {
      \node[draw=none,fill=none] (v\i) at ({360/4 * (\i - 1)}:1) {};
      \node[draw=none, fill=none] at ({360/4 * (\i - 1)}:1.4) {\(v_{\i}\)};
    }

    \draw[midarrow] (v1) -- (v2) node[draw=none,fill=none,midway,above right] {$w$};
    \draw[midarrow] (v2) -- (v3) node[draw=none,fill=none,midway,above left] {$x$};
    \draw[midarrow] (v3) -- (v4) node[draw=none,fill=none,midway,below left] {$y$};
    \draw[midarrow] (v1) -- (v4)node[draw=none,fill=none,midway,below right] {$z$};
    \draw[midarrow40] (v1) -- (v3) node[draw=none,fill=none,near start,above right] {$a$};
    \draw[midarrow60] (v2) -- (v4)node[draw=none,fill=none,near start,below left] {$b$};
    
    \filldraw[color=black, fill=white] ({360/4 * (2 - 1)}:1) circle (3pt);
     \filldraw[color=black, fill=white] ({360/4 * (3 - 1)}:1) circle (3pt);
      \filldraw[color=black, fill=black] ({360/4 * (1 - 1)}:1) circle (3pt);
       \filldraw[color=black, fill=black] ({360/4 * (4 - 1)}:1) circle (3pt);

    \end{tikzpicture}
    
\caption{The AMA-exterior diagram associated to  $\mathsf{O}_{12}^w\mathsf{O}_{23}^x\mathsf{O}_{34}^y\mathsf{O}_{14}^z\mathsf{O}_{13}^a\mathsf{O}_{24}^b\mathsf{e}_1 \mathsf{e}_4$ 
}
    
    \end{figure}

Feigin and Hakobyan \cite{FH15} prove that the monomials corresponding to uncrossed diagrams form a basis for the AMA. A basis for the AMA-exterior algebra is given by the outer product of a basis for AMA and a basis for the exterior algebra. A basis for the exterior algebra is $\{\mathsf{e}_A = \bigwedge_{a \in A} \mathsf{e}_a: A \subset \{ 1,\ldots,n\}\}$ corresponds to the $2^n$ different colouring of $n$ vertices. Thus we can also say that the monomials $\prod \mathsf{L}_{ij}\mathsf{e}_k^{\gamma_k}$ associated to uncrossed AMA-exterior diagrams form a basis for the  associated graded of AMA-Clifford algebra. We represent this basis by ${0,1}$ labeled uncrossed diagrams (AMA-exterior diagrams in Definition \ref{d:AMAEdiags}).

Since $\mathsf{O}_{ij} = \mathsf{L}_{ij} + \frac{1}{2} \mathsf{e}_i\mathsf{e}_j$, we can express every monomial in $\mathsf{O}_{ij}$ uniquely as a sum of monomials in $\mathsf{L}_{ij},\mathsf{e}_k$ which correspond to uncrossed AMA-exterior diagrams. A TAMA diagrams corresponds to a monomials in $\mathsf{O}_{ij}$, thus we can uniquely express every TAMA diagram as a sum of uncrossed AMA-exterior diagrams. We say that the AMA-exterior support of a TAMA diagram $D$ is the uncrossed AMA-exterior diagrams that have non zero coefficient in the expression of $D$ as a linear span of uncrossed AMA-exterior diagrams. This process can be done procedurally by taking the monomial $\prod \mathsf{O}_{ij}^{m_{ij}}$, expressing each individual $\mathsf{O}_{ij}$ as $\mathsf{L}_{ij} + \frac{1}{2}\mathsf{e}_i\mathsf{e}_j$ expanding in the algebra $\mathsf{gr}( \ama \otimes \clif)$ and then using the crossing relations for the AMA to rewrite any crossed monomials as uncrossed monomials. Alternatively, one can take the monomial $\prod \mathsf{O}_{ij}$ corresponding to $D$ and invoke the linear map $\mathsf{gr} (\tama) \to \mathsf{gr} (\ama \otimes\clif)$ which expresses a TAMA diagram in terms of uncrossed AMA-exterior diagrams. This process is highlighted in Figure \ref{fig:TAMA2AMAE}.

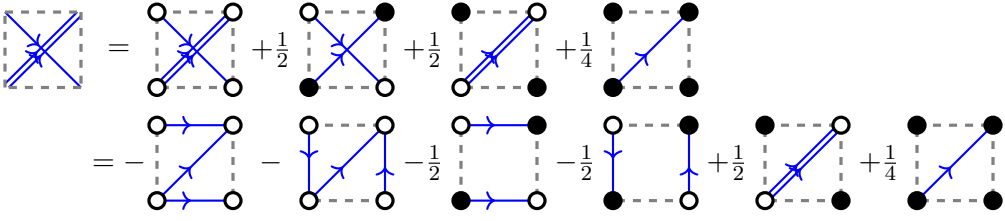
\begin{figure}
    \centering
    \begin{tikzpicture}
      \begin{scope}[very thick,decoration={
    markings,
    mark=at position 0.45 with {\arrow{>}}}
    ]

\draw[blue, thick, postaction={decorate}] (0,0.05) -- (.95,1);
\draw[blue, thick,postaction={decorate}] (0.05,0) -- (1,.95);
\draw[blue, thick,postaction={decorate}] (0,1) -- (1,0);

 \draw[dashed, gray] (0,0) -- (0,1);
 \draw[dashed, gray] (0,1) -- (1,1);
 \draw[dashed, gray] (1,1) -- (1,0);
 \draw[dashed, gray] (1,0) -- (0,0);

\draw(1.5,.5) node{$=$};

\draw[blue, thick, postaction={decorate}] (2,0.05) -- (2.95,1);
\draw[blue, thick, postaction={decorate}] (2.05,0) -- (3,.95);
\draw[blue, thick, postaction={decorate}] (2,1) -- (3,0);

\draw[dashed, gray] (2,0) -- (2,1);
 \draw[dashed, gray] (2,1) -- (3,1);
 \draw[dashed, gray] (3,1) -- (3,0);
 \draw[dashed, gray] (3,0) -- (2,0);

\filldraw[color=black, fill=white] (2,0) circle (3pt);
\filldraw[color=black, fill=white] (2,1)  circle (3pt) ;
\filldraw[color=black, fill=white] (3,1) circle (3pt)  ;
\filldraw[color=black, fill=white] (3,0)  circle (3pt) ;

\draw(3.5,.5) node{$+\frac{1}{2}$};

\draw[blue, thick, postaction={decorate}] (4,0) -- (5,1);
\draw[blue, thick, postaction={decorate}] (4,1) -- (5,0);
\draw[dashed, gray] (4,0) -- (4,1);
 \draw[dashed, gray] (4,1) -- (5,1);
 \draw[dashed, gray] (5,1) -- (5,0);
 \draw[dashed, gray] (5,0) -- (4,0);
\filldraw[color=black, fill=black] (4,0) circle (3pt);
\filldraw[color=black, fill=white] (4,1)  circle (3pt) ;
\filldraw[color=black, fill=black] (5,1) circle (3pt)  ;
\filldraw[color=black, fill=white] (5,0)  circle (3pt) ;

\draw(5.5,.5) node{$+\frac{1}{2}$};

\draw[blue, thick, postaction={decorate}] (6,0.05) -- (6.95,1);
\draw[blue, thick, postaction={decorate}] (6.05,0) -- (7,.95);
\draw[dashed, gray] (6,0) -- (6,1);
 \draw[dashed, gray] (6,1) -- (7,1);
 \draw[dashed, gray] (7,1) -- (7,0);
 \draw[dashed, gray] (7,0) -- (6,0);

\filldraw[color=black, fill=white] (6,0) circle (3pt);
\filldraw[color=black, fill=black] (6,1)  circle (3pt) ;
\filldraw[color=black, fill=white] (7,1) circle (3pt)  ;
\filldraw[color=black, fill=black] (7,0)  circle (3pt) ;

\draw(7.5,.5) node{$+\frac{1}{4}$};

\draw[blue, thick, postaction={decorate}] (8,0) -- (9,1);

\draw[dashed, gray] (8,0) -- (8,1);
 \draw[dashed, gray] (8,1) -- (9,1);
 \draw[dashed, gray] (9,1) -- (9,0);
 \draw[dashed, gray] (9,0) -- (8,0);
\filldraw[color=black, fill=black] (8,0) circle (3pt);
\filldraw[color=black, fill=black] (8,1)  circle (3pt) ;
\filldraw[color=black, fill=black](9,1) circle (3pt)  ;
\filldraw[color=black, fill=black] (9,0)  circle (3pt) ;

\draw(1.5,-1) node{$=-$};

 \draw[blue, thick, postaction={decorate}] (2,-1.5) -- (3,-1.5);
 \draw[blue, thick, postaction={decorate}] (2,-.5) -- (3,-.5);
\draw[blue, thick, postaction={decorate}] (2,-1.5) -- (3,-.5);

\draw[dashed, gray] (2,-1.5) -- (2,-.5);
 \draw[dashed, gray] (3,-.5) -- (3,-1.5);

\filldraw[color=black, fill=white] (2,-1.5) circle (3pt);
\filldraw[color=black, fill=white] (2,-.5)  circle (3pt) ;
\filldraw[color=black, fill=white] (3,-.5) circle (3pt)  ;
\filldraw[color=black, fill=white] (3,-1.5)  circle (3pt) ;

\draw(3.5,-1) node{$-$};

 \draw[blue, thick, postaction={decorate}] (5,-1.5) -- (5,-.5) ;

 \draw[blue, thick, postaction={decorate}] (4,-.5) -- (4,-1.5);

\draw[blue, thick, postaction={decorate}] (4,-1.5) -- (5,-.5);
 \draw[dashed, gray] (4,-.5) -- (5,-.5);
 \draw[dashed, gray] (5,-1.5) -- (4,-1.5);
\filldraw[color=black, fill=white] (4,-1.5) circle (3pt);
\filldraw[color=black, fill=white] (4,-.5)  circle (3pt) ;
\filldraw[color=black, fill=white] (5,-.5) circle (3pt)  ;
\filldraw[color=black, fill=white] (5,-1.5)  circle (3pt) ;

\draw(5.5,-1) node{$-\frac{1}{2}$};

 \draw[blue, thick, postaction={decorate}] (6,-1.5) -- (7,-1.5) ;

 \draw[blue, thick, postaction={decorate}] (6,-.5) -- (7,-.5);

 \draw[dashed, gray] (6,-1.5) -- (6,-.5);
 \draw[dashed, gray] (7,-.5) -- (7,-1.5);

\filldraw[color=black, fill=black] (6,-1.5) circle (3pt);
\filldraw[color=black, fill=white] (6,-.5)  circle (3pt) ;
\filldraw[color=black, fill=black] (7,-.5) circle (3pt)  ;
\filldraw[color=black, fill=white] (7,-1.5)  circle (3pt) ;

\draw(7.5,-1) node{$-\frac{1}{2}$};

 \draw[blue, thick, postaction={decorate}] (9,-1.5) -- (9,-.5) ;

 \draw[blue, thick, postaction={decorate}] (8,-.5) -- (8,-1.5);

 \draw[dashed, gray] (8,-.5) -- (9,-.5);
 \draw[dashed, gray] (9,-1.5) -- (8,-1.5);

\filldraw[color=black, fill=black] (8,-1.5) circle (3pt);
\filldraw[color=black, fill=white] (8,-.5)  circle (3pt) ;
\filldraw[color=black, fill=black] (9,-.5) circle (3pt)  ;
\filldraw[color=black, fill=white] (9,-1.5)  circle (3pt) ;

\draw(9.5,-1) node{$+\frac{1}{2}$};

\draw[blue, thick, postaction={decorate}] (10,-1.45) -- (10.95,-.5);
\draw[blue, thick, postaction={decorate}] (10.05,-1.5) -- (11,-.55);
 \draw[dashed, gray] (10,-1.5) -- (10,-.5);
 \draw[dashed, gray] (10,-.5) -- (11,-.5);
 \draw[dashed, gray] (11,-.5) -- (11,-1.5);
 \draw[dashed, gray] (11,-1.5) -- (10,-1.5);
\filldraw[color=black, fill=white] (10,-1.5) circle (3pt);
\filldraw[color=black, fill=black] (10,-.5)  circle (3pt) ;
\filldraw[color=black, fill=white] (11,-.5) circle (3pt)  ;
\filldraw[color=black, fill=black] (11,-1.5)  circle (3pt) ;

\draw(11.5,.-1) node{$+\frac{1}{4}$};

\draw[blue, thick, postaction={decorate}] (12,-1.5) -- (13,-.5);
 \draw[dashed, gray] (12,-1.5) -- (12,-.5);
 \draw[dashed, gray] (12,-.5) -- (13,-.5);
 \draw[dashed, gray] (13,-.5) -- (13,-1.5);
 \draw[dashed, gray] (13,-1.5) -- (12,-1.5);
\filldraw[color=black, fill=black] (12,-1.5) circle (3pt);
\filldraw[color=black, fill=black] (12,-.5)  circle (3pt) ;
\filldraw[color=black, fill=black](13,-.5) circle (3pt)  ;
\filldraw[color=black, fill=black] (13,-1.5)  circle (3pt) ;
\end{scope}
\end{tikzpicture}
    \caption{Expressing a TAMA diagram associated to $\mathsf{O}_{13}^2\mathsf{O}_{24}$ in terms of uncrossed AMA-exterior diagrams}
    \label{fig:TAMA2AMAE}
\end{figure}

\begin{lemma}\label{l:collectionoflinindep}
    Let $\mathcal{D}$ be a collection of TAMA diagrams. Suppose that each diagram $D \in \mathcal{D}$  has a unique uncrossed AMA-exterior diagram in the support of $D$ when expressed as a linear combination of uncrossed AMA-exterior diagrams. Then $\mathcal{D}$ is a linear independent set.   
\end{lemma}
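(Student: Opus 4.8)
The plan is to reduce the statement to a \emph{unitriangularity} (leading-term) argument carried out in the basis of uncrossed AMA-exterior diagrams. Recall from the discussion preceding the lemma that the uncrossed AMA-exterior diagrams form a linear basis $\mathcal{B}$ of $\mathsf{gr}(\ama \otimes \clif)$: it is the outer product of the Feigin--Hakobyan basis of uncrossed diagrams for $\mathsf{gr}(\ama)$ with the monomial basis $\{\mathsf{e}_A\}$ of $\mathsf{gr}(\clif)$. Denoting by $\psi\colon \mathsf{gr}(\tama) \to \mathsf{gr}(\ama \otimes \clif)$ the linear map described above, each TAMA diagram $D$ is sent to its expansion $\psi(D) = \sum_{\underline{E} \in \mathcal{B}} c_{D,\underline{E}}\,\underline{E}$, and by definition the AMA-exterior support of $D$ is $\{\underline{E} \in \mathcal{B} \mid c_{D,\underline{E}} \neq 0\}$. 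Since any linear relation among the diagrams in $\mathcal{D}$ descends under $\psi$ to a relation among their images, it suffices to prove that the images $\{\psi(D) \mid D \in \mathcal{D}\}$ are linearly independent in $\mathsf{gr}(\ama \otimes \clif)$.

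Next I would make the hypothesis precise. For each $D \in \mathcal{D}$ the assumption supplies a distinguished basis vector $\underline{D}^{*} \in \mathcal{B}$ lying in the AMA-exterior support of $D$, but in the support of no other member of $\mathcal{D}$; equivalently, $c_{D,\underline{D}^{*}} \neq 0$ while $c_{D',\underline{D}^{*}} = 0$ for every $D' \in \mathcal{D} \setminus \{D\}$. This exclusive marker is exactly the ingredient needed for the extraction argument.

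Finally, I would consider an arbitrary finite linear relation $\sum_{D} \lambda_D\, \psi(D) = 0$ with $\lambda_D \in \mathbb{C}$. Fixing a single $D_0$ and reading off the coefficient of the basis vector $\underline{D_0}^{*}$, every term with $D \neq D_0$ contributes zero because $\underline{D_0}^{*}$ lies only in the support of $D_0$; hence that coefficient equals $\lambda_{D_0}\, c_{D_0,\underline{D_0}^{*}}$. As $\mathcal{B}$ is a basis this coefficient must vanish, and since $c_{D_0,\underline{D_0}^{*}} \neq 0$ we conclude $\lambda_{D_0} = 0$. Running this over all $D_0$ forces the relation to be trivial, yielding linear independence of $\mathcal{D}$ (and hence, since linear independence in the associated graded lifts, of the corresponding monomials in $\tama$).

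The argument is short, so the only real obstacle is conceptual bookkeeping rather than computation. One must be careful on two points: that passing from $\mathsf{gr}(\tama)$ to $\mathsf{gr}(\ama \otimes \clif)$ loses no information, which is handled by the one-directional implication in the first paragraph; and that the hypothesis genuinely furnishes a marker \emph{exclusive} to each diagram rather than merely a distinguished element of its own (possibly shared) support. With exclusivity in hand, the coefficient-extraction step is automatic, and no delicate estimate or combinatorial case analysis is required.
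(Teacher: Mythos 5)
Your argument is correct and is essentially the paper's own proof: expand each TAMA diagram in the basis of uncrossed AMA-exterior diagrams, use the exclusive marker $\underline{D}^{*}$ guaranteed by the hypothesis, and extract its coefficient from any linear relation to force each $\lambda_D=0$. Your explicit remark that the hypothesis must be read as furnishing a marker exclusive to each $D$ (rather than a singleton support) matches exactly how the paper uses it.
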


\begin{proof}
    For every $D$ in $\mathcal{D}$ denote $\widehat{\underline{D}}$ the unique AMA-exterior diagram in the support of $D$. Suppose there is a linear dependence on $\mathcal{C}$, 
    \[ \sum_{D \in \mathcal{D}} \lambda_D D.\]
    Then expanding this linear dependence in terms of uncrossed AMA-exterior diagrams gives a linear dependence on uncrossed AMA-exterior diagrams. Since  $\widehat{\underline{D}}$ is only in the support of $D$ and no other diagram in $\mathcal{D}$, then the coefficient  for $\widehat{\underline{D}}$ in this linear dependence is $\lambda_D$.  Then $\lambda_D=0$ since all uncrossed AMA-exterior diagrams are linearly independent. 
\end{proof}

\begin{lemma}\label{l:uncrossablearelinind}
    The uncrossable diagrams are linearly independent.
\end{lemma}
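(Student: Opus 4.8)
The plan is to pass to the associated graded algebra $\mathsf{gr}(\mathcal{WC})\cong S(V\oplus V^{*})\otimes\bigwedge(V)$, where linear independence may be checked, and to produce for each uncrossable diagram a distinguished element of its AMA-exterior support to which Lemma~\ref{l:collectionoflinindep} (or rather the peeling argument behind it) applies. Expanding each factor via $\mathsf{O}_{ij}=\mathsf{L}_{ij}+\tfrac12\mathsf{e}_i\mathsf{e}_j$ and uncrossing the resulting $\mathsf{L}$-monomials, every uncrossable $D$ becomes a combination of uncrossed AMA-exterior diagrams $(\ell,S)$, where $\ell$ is a non-crossing $\mathsf{L}$-labelling and $S$ the black-vertex set of a matching of $D$ recorded in the Clifford factor. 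Since the monomials $\mathsf{x}^{\alpha}\mathsf{y}^{\beta}\mathsf{e}_{\gamma}$ are a basis, it suffices to locate, for each $D$, a support element occurring in no other uncrossable diagram of the collection currently under consideration.

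First I would separate the diagrams by their leading term. For the monomial order that prefers exterior degree $0$ and then orders $\mathsf{x}^{\alpha}\mathsf{y}^{\beta}$ lexicographically, the leading monomial of $D=\prod\mathsf{O}_{ij}^{m_{ij}}$ is that of the purely symmetric part $\prod\mathsf{L}_{ij}^{m_{ij}}$, namely $\mathsf{x}^{\alpha(D)}\mathsf{y}^{\beta(D)}$ with $\alpha(D)_{v}=\sum_{j>v}m_{vj}$ and $\beta(D)_{v}=\sum_{i<v}m_{iv}$; because monomial orders are multiplicative and each $\mathsf{L}_{ij}$ (with $i<j$) has unique leading monomial $\mathsf{x}_i\mathsf{y}_j$, this leading monomial is attained with coefficient $1$. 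Extracting the coefficient of the globally lex-largest such monomial in a putative dependence therefore involves only the diagrams sharing that in/out degree sequence — a single \emph{fibre} — so, peeling fibres off in decreasing order of leading monomial, the problem reduces to linear independence within one fibre at a time.

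Within a fibre all diagrams share their leading monomial, and I would separate them through the Clifford factor: recording a chord $c=(i,j)$ of $D$ as $\mathsf{e}_i\mathsf{e}_j$ yields the AMA-exterior diagram $(D\setminus c,\{i,j\})$, and the black pair $\{i,j\}$ forces any competitor to contain the chord $(i,j)$. By Proposition~\ref{prop:Injective} at most one member of a fibre is non-crossing, the others being crossed-uncrossable and obtained from it by rewiring. One then runs an inductive peeling on the number of chords: in each surviving sub-collection exhibit a diagram carrying a chord (equivalently a black pair $S$) that no other survivor carries, conclude via Lemma~\ref{l:collectionoflinindep} that its coefficient vanishes, delete it, and recurse on the strictly smaller diagrams $D\setminus c$.

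The hard part will be controlling the Clifford factor across fibres: the monomial $\mathsf{e}_S$ remembers only the vertex set $S$, not which matching or orientation produced it, so recording $\{(1,2),(3,4)\}$ and recording $\{(1,3),(2,4)\}$ both give $\mathsf{e}_{1234}$, and diagrams in \emph{different} fibres can share an AMA-exterior diagram. Thus the within-fibre separation is not automatically insulated from the lower fibres, and the genuinely degenerate diagrams — fully nested families such as $\{(1,6),(2,5),(3,4)\}$, or families of mutually non-adjacent short chords such as $\{(1,2),(3,4),(5,6)\}$, each of whose chords is shared with some competitor — are exactly the cases where no single recorded chord is private. The main obstacle is therefore to arrange the peeling order so that every sub-collection still contains a diagram with a distinguishing chord; I expect this to follow from combining the induction on chord number with the fibre filtration, but should it resist, the fallback is to replace the private-element step by a direct rank computation, showing that the finite Gram matrix of the competing diagrams under the nondegenerate pairing $\beta=\kappa\otimes\delta$ of Section~\ref{sec:WeylCliff} is invertible, exactly as in the degree-$2$ and degree-$3$ computations used earlier in the paper.
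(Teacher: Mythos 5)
Your framework is the right one and coincides with the paper's: pass to $\mathsf{gr}(\Cw\Cc)$, expand each $\mathsf{O}_{ij}=\mathsf{L}_{ij}+\tfrac12\mathsf{e}_i\mathsf{e}_j$, rewrite in the basis of uncrossed AMA-exterior diagrams, and feed the result into Lemma~\ref{l:collectionoflinindep}. But the proposal is not a complete proof: the decisive step --- exhibiting, for each uncrossable diagram, a support element occurring in no other uncrossable diagram --- is exactly what you leave open. The fibre filtration by leading symmetric monomial and the subsequent chord-peeling are never carried to a conclusion; you yourself list the families (fully nested chords, pairwise disjoint short chords) for which no single recorded chord is private, and the ``fallback'' Gram-matrix computation is not performed. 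As written, this is a plan with its hardest step unproved.

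The idea you are missing is that the witness should be read off from the \emph{definition} of uncrossability rather than searched for: write $D=D'\cup(i,j)$ with $D'$ uncrossed and take the AMA-exterior diagram $\underline{D}'$ with chord multiset $D'$ and black vertices $\{i,j\}$. The black pair does two jobs at once. First, in $\bigwedge(V)$ a product of two or more factors $\mathsf{e}_p\mathsf{e}_q$ has exterior degree at least four (since $\mathsf{e}_k\wedge\mathsf{e}_k=0$ nothing can cancel), so a competitor $E$ with $\underline{D}'$ in its support must draw its entire Clifford contribution from a single chord, necessarily $(i,j)$; second, the remaining factors of $E$ must reproduce the basis element $\mathsf{L}_{D'}$, and because $D'$ is uncrossed the diagram $D=D'\cup(i,j)$ is then reconstructed. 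This makes both the fibre decomposition and the rank computation unnecessary. One caution, which shows your unease about support elements shared across fibres is not misplaced: if $E\setminus(i,j)$ is crossed, its uncrossing can still contain $\mathsf{L}_{D'}$ --- for instance $\mathsf{O}_{12}^2\mathsf{O}_{34}$ and $\mathsf{O}_{12}\mathsf{O}_{13}\mathsf{O}_{24}$ both have $\mathsf{L}_{12}\mathsf{L}_{34}\,\mathsf{e}_1\mathsf{e}_2$ in their support, because $\mathsf{L}_{13}\mathsf{L}_{24}=\mathsf{L}_{12}\mathsf{L}_{34}+\mathsf{L}_{14}\mathsf{L}_{23}$ --- so the chord $(i,j)$ to be stripped off must be chosen with care (here one should remove $(3,4)$ instead of $(1,2)$). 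Your proposal does not resolve this point either, and resolving it requires the reconstruction argument just described, not the leading-monomial filtration.
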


\begin{proof}
    By definition an uncrossable diagram $D$ is such that $D = D' \cup (i,j)$ with $D'$ an uncrossed diagram. Consider the uncrossed AMA-exterior diagram $\underline{D}'$ that has the same chords as $D'$ with the vertices $i$ and $j$ coloured black. Then $\underline{D}'$ is in the support of $D$ when expressed as a linear combination of uncrossed AMA-exterior diagrams. Furthermore, the only TAMA diagram that has $\underline{D}'$ in its support is $D$, since there is a unique way to join the coloured vertex $i$ to the coloured vertex $j$ with a chord.
    Therefore, the set of uncrossable diagrams satisfies the condition of Lemma \ref{l:collectionoflinindep} and hence must be linearly independent.
\end{proof}

\subsection{Spanning sets and a proof of Conjecture \ref{con:maincon} when \texorpdfstring{$n =4$}{n = 4} and \texorpdfstring{$n =5$}{n = 5}} \label{sec:prooffor4and5}

We first describe the monomials  $\mathsf{O}_{12}^{m_{12}}\mathsf{O}_{23}^{m_{23}}\mathsf{O}_{34}^{m_{34}}\mathsf{O}_{14}^{m_{14}}\mathsf{O}_{13}^{m_{13}}\mathsf{O}_{24}^{m_{24}}$ associated to uncrossable diagrams when $n=4$ and $n = 5$. Studying figure \ref{fig:TAMAD} (where we used $a$ for $m_{13}$, $b$ for $m_{24}$ and so on), we can see that, for $n=4$ the uncrossed diagrams are precisely the diagrams which have either $m_{13}$ or $m_{24}$ zero. Equivalently in the notation of Figure \ref{fig:TAMAD}, either $a$ or $b$ is zero.  Therefore, the uncrossable diagrams, those that we can create by adding a chord to an uncrossed diagrams are diagrams such that either $m_{13} \leq 1$ or $m_{24} \leq 1\}$ thus 
\[ \{ \prod \mathsf{O}_{ij}^{m_{ij}} \mid m_{13} \leq 1 \text{ or } m_{24} \leq 1 \}\]
is the set of monomials associated to uncrossable diagrams.

\begin{lemma}When $n=5$ the set of uncrossable diagrams are quantified by the conditions
\begin{align}\label{eq:uncross5cond}
    |\{ m_{i,i+2} > 0 \}| \leq 3& \\ \nonumber
      m_{i,i+2} + m_{i,i+3} \leq 1  &  \text{ if } m_{i,i+2} \geq2
\end{align}
\end{lemma}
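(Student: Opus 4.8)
The plan is to strip the statement down to a purely combinatorial claim about the crossing pattern of the five diagonals of the pentagon and then settle it by a short finite check on the cycle graph $C_5$. Throughout, call a diagonal \emph{present} if its multiplicity is positive and write $P$ for the set of present diagonals; put $d_i=(i,i+2)$, so that the diagonal multiplicities are the $m_{i,i+2}$ (together with the $m_{i,i+3}$, which record the same five diagonals) while the edge multiplicities $m_{i,i+1}$ will be seen to play no role. First I would record the crossing combinatorics: no chord crosses an edge $(i,i+1)$, since no vertex lies strictly between its endpoints, so only diagonals can cross and the edge multiplicities are unconstrained. The chord-crossing criterion then shows that $d_i$ is crossed exactly by the two diagonals through the vertex $i+1$ that lies between its endpoints, namely $(i+1,i+3)$ and $(i-1,i+1)$. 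Hence the crossing graph on $\{d_1,\dots,d_5\}$ is the $5$-cycle $d_1-d_2-d_3-d_4-d_5-d_1$, two diagonals crossing iff they are adjacent in this cycle (equivalently, sharing a vertex iff non-adjacent). By the description of uncrossed diagrams recalled above, $D$ is uncrossed iff $P$ is an independent set of this $C_5$.

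Next I would reinterpret ``uncrossable''. Deleting an edge, or deleting one copy of a diagonal of multiplicity $\ge 2$, leaves $P$ unchanged and so cannot turn a crossed diagram into an uncrossed one; thus the only effective deletion is of a diagonal of multiplicity exactly $1$. Therefore $D$ is uncrossable iff either it has no crossings at all, or the crossings of $D$---the edges that $C_5$ induces on $P$---can all be destroyed by removing a single present, multiplicity-one diagonal. Equivalently, the induced graph $C_5[P]$ must admit a vertex cover of size one that is realised by a multiplicity-one vertex.

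The combinatorial heart is then a finite enumeration of the induced subgraphs $C_5[P]$. Since a single vertex covers at most the two edges incident to it, $C_5[P]$ is one-vertex-coverable only when it has at most two edges meeting at a common vertex; running through the subsets of $C_5$ this forces $|P|\le 3$, which is the first condition of \eqref{eq:uncross5cond}. For the accompanying multiplicity requirement I would split on the shape of $C_5[P]$: for $|P|\le 1$ there is nothing to check; for $|P|=2$ the two diagonals are either non-adjacent (no crossing) or form a single crossing, removable iff one of them has multiplicity $1$; and for $|P|=3$ the induced graph is either a path $d_a-d_b-d_c$, whose only size-one cover is the middle vertex $d_b$ (forcing the middle diagonal to have multiplicity $1$), or an adjacent pair together with an isolated diagonal, for which one of the pair must have multiplicity $1$. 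In every case the requirement is exactly that whenever a diagonal is heavy (multiplicity $\ge 2$) the two diagonals crossing it have total multiplicity at most $1$; writing these two crossing partners as the pair of diagonals incident to the intermediate vertex yields the second condition of \eqref{eq:uncross5cond}. Conversely, granting \eqref{eq:uncross5cond}, the same case split exhibits an explicit multiplicity-one diagonal whose removal lands in an independent present set, establishing sufficiency.

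I expect the main obstacle to be bookkeeping in this last translation rather than any conceptual difficulty: one must keep straight the two distinct $5$-cycles carried by the diagonals (the crossing adjacency and its vertex-sharing complement, which is how the shifts $i+2$ and $i+3$ enter the indices) and, above all, treat the ``heavy middle of a path'' case correctly. There it does not suffice for a crossing pair merely to contain a multiplicity-one diagonal, because the only admissible single deletion is of the shared middle diagonal, so that diagonal itself must be the light one; getting this case right is what pins down the precise inequality in the second line of \eqref{eq:uncross5cond}.
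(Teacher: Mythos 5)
Your proof is correct and follows essentially the same route as the paper's: a finite case analysis of which of the five diagonals of the pentagon are present, using that edges never cross and that removing one copy of a chord of multiplicity $\geq 2$ cannot destroy a crossing. Your organisation via the crossing graph (the $5$-cycle on the diagonals) and one-vertex covers is in fact tighter than the paper's own argument, which only treats the ``A''-shaped configuration of three diagonals and whose statement of the second condition carries index typos (the hypothesis for $m_{i,i+2}+m_{i,i+3}\leq 1$ should be that the diagonal $(i-1,i+1)$ crossed by both has multiplicity $\geq 2$, as you correctly reconstruct); in particular you also cover the edge-plus-isolated-diagonal configuration that the paper's ``without loss of generality'' skips.
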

\begin{proof}
Suppose that a diagram does not satisfy conditions (\ref{eq:uncross5cond}) then either there are $4$ different internal chords or   $m_{i,i+1} \geq 2$ and $m_{i,i+2}+m_{i,i+3} \geq 2$.
If there are $4$ or more different internal chords, it is impossible to remove a chord to leave an uncrossed diagram. If  $m_{i,i+1} \geq 2$ and $m_{i,i+2}+m_{i,i+3} \geq 2$ then one needs to remove more than one chord to uncross this diagram. The contrapositive now states that any uncrossable diagram must satisfy (\ref{eq:uncross5cond}). Furthermore, if a diagram satisfies conditions (\ref{eq:uncross5cond}) then there are at most three internal chords. Without loss of generality let these chords be $m_{i,i+2}, m_{i,i+3},m_{i-1,i+1}$ forming an 'A' pointing at $i$. if $m_{i-1,i+1} =1$ then one can remove this chord to create an uncrossed diagram. If $m_{i-1,i+1} \geq 2$ then by the second condition of (\ref{eq:uncross5cond}), $m_{i,i+1},m_{i,i+3} \leq 1$, thus one can remove this single chord to form an uncrossed diagram. 
\end{proof}
Thus 
\[ \{ \prod \mathsf{O}_{ij}^{m_{ij}} \mid|\{ m_{i,i+2} > 0 \}| \leq 3 \text{ and }
        m_{i,i+2} + m_{i,i+3} \leq 1  \text{ if } m_{i,i+2} \geq2\}\] 
is the set of monomials associated to uncrossable diagrams.

\begin{theorem}
    When $n=4,5$ the uncrossable diagrams correspond to a  basis of monomials for the subalgebra of $ \mathcal{W} \otimes \clif$ generated by $O_{ij}$.
\end{theorem}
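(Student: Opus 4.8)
The statement has two halves. Linear independence of the uncrossable monomials is exactly Lemma~\ref{l:uncrossablearelinind}, so the plan is to establish that the uncrossable monomials \emph{span} the subalgebra generated by the $O_{ij}$; combining the two gives the basis. I would work throughout in the associated graded algebra $\mathsf{gr}(\mathcal{WC})$, where the symbols $\mathsf{O}_{ij}$ pairwise commute (each has even Clifford degree), so that the subalgebra they generate is spanned by the ordered monomials $\prod \mathsf{O}_{ij}^{m_{ij}}$ indexed by TAMA diagrams. The goal is then to show, by an outer induction on filtration degree (which absorbs the lower-order correction terms produced when the exact relations \eqref{e:relsinideal} are applied inside $\tama_{\overline 0}$) and, within a fixed degree, by induction on a suitable measure of crossedness, that each such monomial is a linear combination of uncrossable ones.

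The engine of the reduction is the top-degree part of the relations of Example~\ref{eg:rel4and5}. Since the right-hand side of the $A=B$ relation $O_{ijkl}^2 = \tfrac34 - \sum O_{ab}^2$ lives in strictly lower filtration degree than the left-hand side, passing to symbols yields $\mathsf{O}_{ijkl}^2 = 0$ in $\mathsf{gr}(\mathcal{WC})$ for every four-element subset; likewise the $|A\cap B|=3$ relation gives $\mathsf{O}_{ijkl}\mathsf{O}_{ijks}=0$. Expanding $\mathsf{O}_{ijkl} = \mathsf{O}_{ij}\mathsf{O}_{kl}-\mathsf{O}_{ik}\mathsf{O}_{jl}+\mathsf{O}_{il}\mathsf{O}_{jk}$ via \eqref{eq:4-index} and solving for the crossing term, the identity $\mathsf{O}_{ijkl}^2=0$ becomes an \emph{uncrossing relation}
\[
\mathsf{O}_{ik}^2\mathsf{O}_{jl}^2 = -\mathsf{O}_{ij}^2\mathsf{O}_{kl}^2 - \mathsf{O}_{il}^2\mathsf{O}_{jk}^2 + 2\mathsf{O}_{ij}\mathsf{O}_{kl}\mathsf{O}_{ik}\mathsf{O}_{jl} - 2\mathsf{O}_{ij}\mathsf{O}_{kl}\mathsf{O}_{il}\mathsf{O}_{jk} + 2\mathsf{O}_{ik}\mathsf{O}_{jl}\mathsf{O}_{il}\mathsf{O}_{jk},
\]
in which the crossing pair $(i,k),(j,l)$ occurs on the right with strictly smaller minimal multiplicity than the value $2$ it has on the left.

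For $n=4$ I would run the induction on the single quantity $c = \min(m_{13},m_{24})$, noting that a diagram is uncrossable precisely when $c \le 1$. Given a monomial with $c \ge 2$, I would factor out the boundary part $\mathsf{O}_{12}^{w}\mathsf{O}_{23}^{x}\mathsf{O}_{34}^{y}\mathsf{O}_{14}^{z}$ together with $\mathsf{O}_{13}^{m_{13}-2}\mathsf{O}_{24}^{m_{24}-2}$ and rewrite the remaining $\mathsf{O}_{13}^2\mathsf{O}_{24}^2$ using the uncrossing relation above. Since the $\mathsf{O}_{ij}$ commute in $\mathsf{gr}(\mathcal{WC})$, the carried factor simply multiplies each term on the right, and a direct inspection shows every resulting monomial has strictly smaller value of $c$ (each summand reduces $\min(m_{13},m_{24})$ by $1$ or $2$); the induction terminates at uncrossable monomials.

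For $n=5$ the same scheme applies but is combinatorially heavier, and this is where the main difficulty lies. There are now five crossing pairs, one for each four-subset, and one must also invoke the $|A\cap B|=3$ relations $\mathsf{O}_{ijkl}\mathsf{O}_{ijks}=0$ to treat diagrams carrying three distinct diagonals or high-multiplicity diagonals. The delicate point is that a ``side'' chord of a four-subset — for instance $(1,4)$ inside $\{1,2,3,4\}$ — is itself a diagonal of the pentagon, so applying one uncrossing relation can create new diagonals and hence new crossings elsewhere, and a naive complexity count need not decrease. The plan is therefore to design a well-founded complexity measure tailored to the two conditions in \eqref{eq:uncross5cond} (for instance, ordering lexicographically first by the number of distinct diagonals in excess of the allowed three, then by the total multiplicity carried by crossing pairs), to fix an order in which the five relations are applied, and to verify case-by-case that each reduction strictly decreases this measure with irreducible monomials being exactly those satisfying \eqref{eq:uncross5cond}. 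Checking that this measure genuinely decreases under all competing reductions — rather than merely shifting crossings around the pentagon — is the crux of the argument; once spanning is secured, linear independence again comes from Lemma~\ref{l:uncrossablearelinind}.
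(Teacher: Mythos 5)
Your treatment of linear independence (citing Lemma~\ref{l:uncrossablearelinind}) and your $n=4$ argument match the paper: the paper likewise passes to the associated graded, extracts the single top-degree relation $\mathsf{O}_{1234}^2=0$, and runs an induction on the number of ``double crosses'', which is equivalent to your descent on $\min(m_{13},m_{24})$. The extraction of $\mathsf{O}_{ijkl}\mathsf{O}_{ijks}=0$ from the $|A\cap B|=3$ relations is also correct, since the right-hand side in Example~\ref{eg:rel4and5} has strictly lower filtration degree.

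However, for $n=5$ there is a genuine gap: you correctly identify that the crux is a well-founded complexity measure that strictly decreases under every application of the fifteen relations, but you do not construct one — you explicitly defer the ``case-by-case'' verification, and this verification \emph{is} the proof in this case. Worse, the candidate measure you float does not work. Take the double-cross relation on $\{1,2,3,4\}$ applied to a monomial carrying spectator diagonals, e.g.\ $\mathsf{O}_{13}^2\mathsf{O}_{24}^2\,\mathsf{O}_{25}\mathsf{O}_{35}$: the left side has four distinct diagonals, while the term $2\,\mathsf{O}_{13}\mathsf{O}_{24}\mathsf{O}_{14}\mathsf{O}_{23}\,\mathsf{O}_{25}\mathsf{O}_{35}$ on the right introduces the new diagonal $(1,4)$ and has five, so your first lexicographic component \emph{increases}; this is exactly the ``shifting crossings around the pentagon'' phenomenon you worry about, and it actually occurs. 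The paper's resolution is a different, geometric invariant: the number of \emph{internal quadrilaterals} of the diagram (quadrilaterals traced along chord segments with at least three corners interior to the pentagon). The paper observes that the fifteen monomials appearing in the double-cross, ``A'', and uncapped-star relations are precisely the four-chord diagrams containing an internal quadrilateral, that each relation replaces such a configuration by diagrams with strictly fewer internal quadrilaterals, and that the diagrams with none are exactly the uncrossable ones of \eqref{eq:uncross5cond}. Without this (or an equivalent terminating measure), your $n=5$ reduction is not shown to terminate, so the spanning claim for $n=5$ is not established.
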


\begin{proof}[Proof when $n=4$] Since Lemma \ref{l:uncrossablearelinind} shows that uncrossable diagrams correspond to linear independent monomials, we are left to prove that the uncrossable diagrams correspond to a spanning set of the subalgebra generated by $O_{ij}$. We again work in the associated graded. By applying induction and proving that the uncrossable diagrams of degree $k$ span the associated graded elements of degree $k$ then proving that the associated graded uncrossable monomials span implies that their counterparts in the filtered algebra also span.
     If $n=4$ there is a single relation as calculated in Theorem \ref{t:relsinideal}. This corresponds to the following relation in $\mathsf{gr} (\mathcal{W} \clif)$
\begin{equation}\label{eq:doublecrossingrel}
    \mathsf{O}_{1234}^2  = (\mathsf{O}_{12}\mathsf{O}_{34} - \mathsf{O}_{13}\mathsf{O}_{42} + \mathsf{O}_{14}\mathsf{O}_{32})^2=0 
\end{equation} 

We refer to the single relation (\ref{eq:doublecrossingrel}) as the double crossing relation. We use this relation to rewrite any monomial including $\mathsf{O}_{13}^{m_{13}}\mathsf{O}_{24}^{m_{23}}$ for $m_{13}$ and $m_{23}$ greater than one in terms of monomials in the set 
\[ \{ \prod \mathsf{O}_{ij}^{m_{ij}} \mid m_{13} \leq 1 \text{ or } m_{24} \leq 1 \}.\]
This process must terminate since a diagram is uncrossable if and only if it contains no double crosses ($m_{13}$ and $m_{24} \geq 2$). Hence if we start with a monomial correspond to a diagram with $k$ double crosses then we can apply relation (\ref{eq:doublecrossingrel}) to write this monomial as a span of monomials associated to diagrams with at most $k-1$ double crosses. Successively applying this process shows that every monomial in $\mathsf{O}_{ij}$ can be written as a sum of monomials associated to diagrams with no double cross. For $n=4$ every diagram not including a double cross is uncrossable. Hence the set of monomials corresponding to uncrossable diagram span the subalgebra generated by $\mathsf{O}_{ij}$.
\end{proof}

\begin{proof}[Proof when $n=5$]
When $n=5$ there are $15$ relations coming from Theorem \ref{t:relsinideal}, $5$ of which use $4$ of the five vertices and are of the same form as (\ref{eq:doublecrossingrel}). These $5$ are equivalent to picking $A = B \subset \{1,\ldots,5\}$. There are ten more relations that come from the ten choices of $A, B$ subsets of size $4$ such that $|A \cap B| = 3$. We split these ten relations into two types.  The first type when $A\cap B$ is a set of $3$ (modulo $5$) consecutive numbers. The second five when $A \cap B$ is not consecutive. More precisely $A \cap B $ is a rotation of $\{ 1,3,4\}$. Recall 
\[ \mathsf{O}_{abcd} = \mathsf{O}_{ab}\mathsf{O}_{cd} + \mathsf{O}_{ac}\mathsf{O}_{bd} + \mathsf{O}_{ad}\mathsf{O}_{cb},\]

Thus the equation of the form 
\[ \mathsf{O}_{1234}\mathsf{O}_{2345} + \mathsf{O}_{2345}\mathsf{O}_{1234} = 0\] which, since $\mathsf{O}_A$ and $\mathsf{O}_B$ commute in the associated graded is equivalent to 
\[ \mathsf{O}_{1234}\mathsf{O}_{2345} = 0,\] then becomes,

\[ (\mathsf{O}_{12}\mathsf{O}_{34} +\mathsf{O}_{13}\mathsf{O}_{24} + \mathsf{O}_{14}\mathsf{O}_{32})( \mathsf{O}_{23}\mathsf{O}_{45} + \mathsf{O}_{24}\mathsf{O}_{35}+\mathsf{O}_{25}\mathsf{O}_{43}) =0.\]

The monomial $\mathsf{O}_{13}\mathsf{O}_{24}^2\mathsf{O}_{35}$ occurs only in the support of the equation $\mathsf{O}_{1234}\mathsf{O}_{2345}=0$. Similarly $\mathsf{O}_{i, i+2}\mathsf{O}_{i+1,i+3}^2\mathsf{O}_{i+2, i+4}$ occurs uniquely in the equation $\mathsf{O}_{i,i+1,i+2,i+3}\mathsf{O}_{i+1,i+2,i+3,i+4}$. This accounts for five equations which for pictographical reasons we call the `A' equation. We write these five equations as the five rotations of the following equation.
\begin{align}\label{eq:Arels} -\mathsf{O}_{13}\mathsf{O}_{24}^2\mathsf{O}_{35} =& \mathsf{O}_{12}\mathsf{O}_{23}\mathsf{O}_{34}\mathsf{O}_{45} + \mathsf{O}_{12}\mathsf{O}_{24}\mathsf{O}_{34}\mathsf{O}_{35} - \mathsf{O}_{12}\mathsf{O}_{25}\mathsf{O}_{34}^2 + \mathsf{O}_{13}\mathsf{O}_{23}\mathsf{O}_{24}\mathsf{O}_{45}\\ &+ \mathsf{O}_{13}\mathsf{O}_{24}\mathsf{O}_{25}\mathsf{O}_{43} - \mathsf{O}_{14}\mathsf{O}_{23}^2\mathsf{O}_{45}+ \mathsf{O}_{14}\mathsf{O}_{23}\mathsf{O}_{24}\mathsf{O}_{35} + \mathsf{O}_{14}\mathsf{O}_{23}\mathsf{O}_{25}\mathsf{O}_{43} \nonumber\end{align}
These five equations account for $A,B$ subsets of size $4$ such that $A \cap B$ is a subset of size $3$ of consecutive (modulo $5$) numbers.

Now for the other $5$ tableau relations, when $A \cap B$ is not three consecutive numbers. We get
\[ \mathsf{O}_{1234}\mathsf{O}_{1345}=  (\mathsf{O}_{12}\mathsf{O}_{34} +\mathsf{O}_{13}\mathsf{O}_{24} + \mathsf{O}_{14}\mathsf{O}_{32})( \mathsf{O}_{13}\mathsf{O}_{45} + \mathsf{O}_{14}\mathsf{O}_{35}+\mathsf{O}_{15}\mathsf{O}_{43})=0 \]
and the monomial $\mathsf{O}_{13}\mathsf{O}_{14}\mathsf{O}_{24}\mathsf{O}_{35}$ is uniquely in the support of the equation $\mathsf{O}_{1234}\mathsf{O}_{1345}=0$. Similarly, the monomial $\mathsf{O}_{i ,i+2} \mathsf{O}_{i ,i+3} \mathsf{O}_{i+1,i+3}\mathsf{O}_{i+2,i+4}$ is uniquely in the support of the equation $\mathsf{O}_{i,i+1,i+2.i+3}\mathsf{O}_{i,i+2,i+3,i+4}$. We thus have five more equations which are the rotations of the following equation

\begin{align}\label{eq:starrels} -\mathsf{O}_{13}\mathsf{O}_{14}\mathsf{O}_{24}\mathsf{O}_{35} =& \mathsf{O}_{12}\mathsf{O}_{13}\mathsf{O}_{34}\mathsf{O}_{45} + \mathsf{O}_{12}\mathsf{O}_{14}\mathsf{O}_{34}\mathsf{O}_{35} - \mathsf{O}_{12}\mathsf{O}_{15}\mathsf{O}_{34}^2 + \mathsf{O}_{13}\mathsf{O}_{13}\mathsf{O}_{24}\mathsf{O}_{45}\\ &+ \mathsf{O}_{13}\mathsf{O}_{15}\mathsf{O}_{24}\mathsf{O}_{43} - \mathsf{O}_{14}\mathsf{O}_{13}\mathsf{O}_{24}\mathsf{O}_{45}+ \mathsf{O}_{14}\mathsf{O}_{23}\mathsf{O}_{14}\mathsf{O}_{35} + \mathsf{O}_{14}\mathsf{O}_{15}\mathsf{O}_{23}\mathsf{O}_{43} \nonumber\end{align}

We call these equations the uncapped star relations. We can use equation (\ref{eq:Arels}) and (\ref{eq:starrels}) to remove any instance of $\mathsf{O}_{13}\mathsf{O}_{24}^2\mathsf{O}_{35}$ and $\mathsf{O}_{13}\mathsf{O}_{14}\mathsf{O}_{24}\mathsf{O}_{35}$ or rotations of such monomials. 

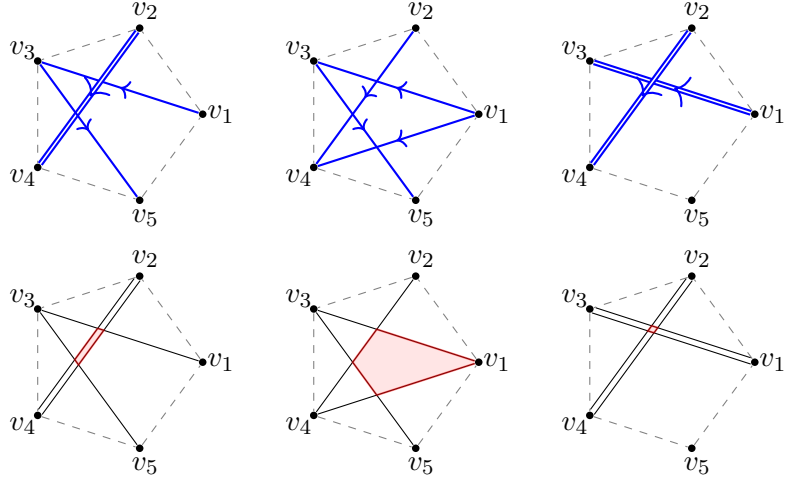
\begin{figure}[ht]
\centering
  \begin{diagram}{5}{
    \draw[midarrow] (v1) -- (v3);
     \draw[midarrow,double] (v2) -- (v4);
    \draw[midarrow] (v3) -- (v5);
  }
  \end{diagram} \quad 
  \begin{diagram}{5}{
    \draw[midarrow] (v1) -- (v3);
    \draw[midarrow] (v1) -- (v4);
    \draw[midarrow] (v2) -- (v4);
    \draw[midarrow] (v3) -- (v5);
  }
   \path[name intersections={of=A and B, by=I1}];
  \path[name intersections={of=B and C, by=I2}];
  \path[name intersections={of=C and A, by=I3}];
   \draw[red, thick, fill=red!20, opacity=0.5]
    (v1) -- (I1) -- (I2) -- (I3) -- cycle;
  \end{diagram}\quad 
   \begin{diagram}{5}{
    \draw[midarrow,double] (v1) -- (v3);
    \draw[midarrow,double] (v2) -- (v4);
  }
  \end{diagram}\\
  
  \begin{tikzpicture}[
    baseline=(current bounding box.center),
    scale=1.2,
    every node/.style={circle, fill=black, inner sep=1pt},
    midarrow/.style={
      blue,
      thick,
      postaction={decorate},
      decoration={
        markings,
        mark=at position 0.5 with {\arrow{>}}
      }
    }]
  \foreach \i in {1,...,5} {
      \node (v\i) at ({360/5 * (\i - 1)}:1) {};
      \node[draw=none, fill=none] at ({360/5 * (\i - 1)}:1.2) {\(v_{\i}\)};
    }

    \foreach \i [evaluate=\i as \j using {int(mod(\i,5)+1)}] in {1,...,5} {
      \draw[dashed, gray] (v\i) -- (v\j);
    }
     \node[draw=none, fill=none] (v21) at ({360/5 * (2 - 1)+2}:1) {};
     \node[draw=none, fill=none] (v41) at ({360/5 * (4 - 1)-2}:1) {};
     \node[draw=none, fill=none] (v29) at ({360/5 * (2 - 1)-2}:1) {};
     \node[draw=none, fill=none] (v49) at ({360/5 * (4 - 1)+2}:1) {};
    \draw[name path=A] (v1) -- (v3);
    \draw[name path=B] (v21) -- (v41);
    \draw[name path=C] (v29) -- (v49);
    \draw[name path=D] (v3) -- (v5);

   \path[name intersections={of=A and B, by=I1}];
   \path[name intersections={of=D and B, by=I2}];
   \path[name intersections={of=D and C, by=I3}];
   \path[name intersections={of=A and C, by=I4}];
  
   \draw[red, thick, fill=red!20, opacity=0.5]
    (I1) -- (I2) -- (I3) -- (I4) -- (I1);
 \end{tikzpicture} \quad \begin{tikzpicture}[
    baseline=(current bounding box.center),
    scale=1.2,
    every node/.style={circle, fill=black, inner sep=1pt},
    midarrow/.style={
      blue,
      thick,
      postaction={decorate},
      decoration={
        markings,
        mark=at position 0.5 with {\arrow{>}}
      }
    }]
  \foreach \i in {1,...,5} {
      \node (v\i) at ({360/5 * (\i - 1)}:1) {};
      \node[draw=none, fill=none] at ({360/5 * (\i - 1)}:1.2) {\(v_{\i}\)};
    }

    \foreach \i [evaluate=\i as \j using {int(mod(\i,5)+1)}] in {1,...,5} {
      \draw[dashed, gray] (v\i) -- (v\j);
    }

    \draw[name path=A] (v1) -- (v3);
    \draw[name path=B] (v1) -- (v4);
    \draw[name path=C] (v2) -- (v4);
    \draw[name path=D] (v3) -- (v5);

   \path[name intersections={of=A and C, by=I1}];
   \path[name intersections={of=C and D, by=I2}];
   \path[name intersections={of=B and D, by=I3}];
  
   \draw[red, thick, fill=red!20, opacity=0.5]
    (v1) -- (I1) -- (I2) -- (I3) -- (v1);
 \end{tikzpicture} \quad \begin{tikzpicture}[
    baseline=(current bounding box.center),
    scale=1.2,
    every node/.style={circle, fill=black, inner sep=1pt},
    midarrow/.style={
      blue,
      thick,
      postaction={decorate},
      decoration={
        markings,
        mark=at position 0.5 with {\arrow{>}}
      }
    }]
  \foreach \i in {1,...,5} {
      \node  (v\i) at ({360/5 * (\i - 1)}:1) {};
      \node[draw=none, fill=none] at ({360/5 * (\i - 1)}:1.2) {\(v_{\i}\)};
    }

    \foreach \i [evaluate=\i as \j using {int(mod(\i,5)+1)}] in {1,...,5} {
      \draw[dashed, gray] (v\i) -- (v\j);
    }

      \node [draw=none, fill=none] (v11) at ({360/5 * (1 - 1)+2}:1) {};
     \node [draw=none, fill=none] (v31) at ({360/5 * (3 - 1)-2}:1) {};
        \node [draw=none, fill=none] (v19) at ({360/5 * (1 - 1)-2}:1) {};
     \node [draw=none, fill=none] (v39) at ({360/5 * (3 - 1)+2}:1) {};

     \node [draw=none, fill=none] (v21) at ({360/5 * (2 - 1)+2}:1) {};
     \node [draw=none, fill=none] (v41) at ({360/5 * (4 - 1)-2}:1) {};
      \node [draw=none, fill=none] (v29) at ({360/5 * (2 - 1)-2}:1) {};
     \node [draw=none, fill=none] (v49) at ({360/5 * (4 - 1)+2}:1) {};

    \draw[name path=A] (v19) -- (v39);
    \draw[name path=B] (v11) -- (v31);
    \draw[name path=C] (v29) -- (v49);
    \draw[name path=D] (v21) -- (v41);

   \path[name intersections={of=A and C, by=I1}];
   \path[name intersections={of=A and D, by=I2}];
   \path[name intersections={of=B and D, by=I3}];
   \path[name intersections={of=B and C, by=I4}];
  
   \draw[red, thick, fill=red!20, opacity=0.5]
    (I1) -- (I2) -- (I3) -- (I4) -- (I1);
 \end{tikzpicture}
  \caption{Diagrams associated to the monomials $\mathsf{O}_{13}\mathsf{O}_{24}^2\mathsf{O}_{35}$, $\mathsf{O}_{13}\mathsf{O}_{14}\mathsf{O}_{24}\mathsf{O}_{35}$ and $\mathsf{O}_{24}^2 \mathsf{O}_{13}^2$. One can see why we call the relation involving the left diagram an 'A' relation and the middle an uncapped star relation, while the right diagram corresponds to a double cross relation. }
  \label{fig:AnduncappedStardiags} 
  \end{figure}

Let us define an internal quadrilateral of a diagram $D$ to be a quadrilateral following parts of the chords of $D$ (including double lines as separate lines) that has at least $3$ corners that are not vertices of the diagram. Note that the diagrams associated to $\mathsf{O}_{13}\mathsf{O}_{24}^2\mathsf{O}_{35}$, $\mathsf{O}_{13}\mathsf{O}_{14}\mathsf{O}_{24}\mathsf{O}_{35}$ and $\mathsf{O}_{13}^2\mathsf{O}_{24}^2$ have a single internal quadrilateral and the 15 diagrams involved in the `A' relations (\ref{eq:Arels}) uncapped star relations  (\ref{eq:starrels}) and double cross relations (\ref{eq:doublecrossingrel}) are the only diagrams with $4$ chords that have an internal quadrilateral. 

Now consider any monomial $M_O$ in $\mathsf{O}_{ij}$. This monomials corresponds to a unique diagram $D_{M_O}$. Suppose that this diagram has $k>0$ internal quadrilaterals. Then we can apply relations (\ref{eq:doublecrossingrel}), (\ref{eq:Arels}) or (\ref{eq:starrels}) to write $M_O$ as a sum of monomials each with associated diagrams with at most $k-1$ internal quadrilaterals. Applying this successively, we have deduced that every monomial in $\mathsf{O}_{ij}$ can be written as a span of monomials associated to diagrams with no internal quadrilaterals. To finish the proof note that any diagram in $5$ vertices that has zero internal quadrilaterals can only have $3$ internal chords and if $m_{i,i+2} > 1$ then $m_{i+1,i+3} + m_{i+1,1+4} <1$. This is precisely the condition for a diagram to be uncrossable (\ref{eq:uncross5cond}). Therefore the uncrossable diagrams and those with no internal quadrilateral precisely agree and we find that the monomials corresponding to uncrossable diagrams span the subalgebra generated by $\mathsf{O}_{ij}$.
\end{proof}

\bibliographystyle{abbrv}

\bibliography{References}

\end{document}